\theoremstyle{definition}
\newtheorem{cor}{Corollary}
\newtheorem{prop}{Proposition}
\newtheorem{rem}{Remark}
\newtheorem{defi}{Definition}
\newtheorem{theo}{Theorem}
\newcommand {\R}{\mathbb R}
\newcommand {\C}{\mathbb C}
\def\tribare#1{\left|\!\left|\!\left|{#1}\right|\!\right|\!\hskip-0.2pt\right|}
\begin{document} 
\author{Antoine Derighetti}
\title{On the mutipliers of the Fig$\grave{\hbox{a}}$\hskip2pt-Talamanca  Herz algebra}\maketitle

{\bf Abstract.} Let $G$ be a locally compact group and $p,q \in \R$ with $p>1, \hskip2pt p\not =2$ and $q$ between $2$ and $p$ (if $p<2$ then $p<q<2,$ if $p>2$ then $2<q<p$.) The main result of the paper is that $ A_{q}(G)$ multiplies $A_p(G),$ more precisely we show that the Banach algebra $A_p(G)$ is a Banach module on $A_q(G).$ 
\vskip10pt
\section{Introduction}
Let $G$ be a locally compact group, in [1] an entirely selfcontained proof of the following result was given (section 8.3 Theorem 8 p. 158): for every $1<p<\infty$  $A_p(G)$ is a Banach module over $A_2(G)$: for every $u\in A_2(G)$ and for every $v\in A_p(G)$ we have $uv\in A_p(G)$ and 
$\|uv\|_{A_p} \leq \|u\|_{A_2} \|v\|_{A_p}.$ Let now $p, q \in \R$ with $p>1$, $p\not =2$ and $q$
 between $2$ and $p.$ We show (Theorem 8) that for every $u\in A_q(G)$ and for every $v\in A_p(G)$ we have $uv\in A_p(G)$ and $\|uv\|_{A_p} \leq \|u\|_{A_q} \|v\|_{A_p}.$
 \vskip1pt
We recall some definitions. For $\varphi$ a map of $G$ into $\C$ we set $\check \varphi(x)=\varphi(x^{-1}),\hskip3pt _{a}\varphi(x)=\varphi(ax)$ and $\varphi_{a}(x)=\varphi(xa)$ for $a, x\in G.$ For $1< p<\infty$  $L^{p}(G)$ is the $L^p$-space with respect to a left Haar measure of $G.$ Let $\mathcal L(L^{p}(G))$  be the  Banach algebra of all linear bounded operators of $L^{p}(G)$, the operator norm of an operator $T$ is denoted $\tribare{T}_p$. An element $T$ of $\mathcal L(L^{p}(G))$ is said to be  a $p$\hskip1pt-convolution operator (written $T\in CV_p(G)$) if $T(_{a} \varphi)=\hskip1pt_{a} T(\varphi)$ for every $a\in G$ and for every $\varphi \in L^{p}(G).$
\vskip1pt
We denote by $C_{00}(G)$ the set of all continuous  complex valued functions on $G$ with compact support. Let $\mu$ be a Radon measure on $G$ we put $\check{\mu}(\varphi)=\mu(\check{\varphi}).$ Suppose that $\mu$ is a bounded  Radon measure on $G$ ($\mu \in M^{1}(G)$), there is a unique $S\in \mathcal L(L^{p}(G))$ such that
$$S[\varphi]=\big[\varphi *\big(\Delta_{G}^{1/{p'}}\check{\mu}\big)\big]$$for every $\varphi\in C_{00}(G).$ The operator $S$ is denoted $\lambda_{G}^{p}(\mu).$ We have $\lambda_{G}^{p}(\mu) \in CV_p(G)$ and $\tribare{\lambda_{G}^{p}(\mu) }_p \leq \|\mu\|.$ We also have $\lambda_{G}^{p}(\delta_{a}) \varphi=\varphi_a \Delta_{G}(a)^{1/p}.$ In [1] we proved that every $1<p<\infty$
and for every amenable locally compact group $G$ the following deep inequality holds: 
$\tribare{\lambda_{G}^{2}(\mu)}_2\leq \tribare{\lambda_{G}^{p}(\mu)}_p$ for every $\mu\in M^{1}(G).$ Even for finite abelian groups this result is not trivial.

In this paper we show (Corollary 10) that for every amenable locally compact group $G$, for $p,q \in \R$ with $p>1, \hskip2pt p\not =2$ and $q$ between $2$ and $p$ we have $\tribare{\lambda_{G}^{q}(\mu)}_q\leq \tribare{\lambda_{G}^{p}(\mu)}_p$ for every $\mu\in M^{1}(G).$

Let $G$ be a locally compact group and $1<p<\infty.$ We denote by $\mathcal A_p(G)$ the set of all pairs $((k_n)_{n=1}^{\infty},(l_n)_{n=1}^{\infty})$ where $(k_n)_{n=1}^{\infty}$ is a sequence of $\mathcal L^{p}(G)$ and $(l_n)_{n=1}^{\infty}$ is a sequence of $\mathcal L^{p'}(G)$ with
$$\sum_{n=1}^{\infty}N_p(k_n) N_{p'}(l_n) <\infty.$$The topology on $\mathcal L(L^p(G)),$ associated to the family of seminorms
$$T\mapsto \Bigg|\sum_{n=1}^{\infty} <T[k_n],[l_n]>\Bigg|$$with $((k_n)_{n=1}^{\infty},(l_n)_{n=1}^{\infty})\in \mathcal A_p(G),$ is called the ultraweak topology. The ultraweak closure of $\lambda_{G}^{p}(M^1(G))$ in $\mathcal L(L^p(G))$ is denoted $PM_p(G).$ Every element of $PM_p(G)$ is called a $p$\hskip1pt-pseudomeasure. The space $PM_p(G)$ can be identified as the dual of a Banach algebra $A_p(G)$ of continuous continuous functions on $G.$

We denote by $A_p(G)$ the set:
$$\Big\{u: G \rightarrow \C :  \text{there is} \hskip3pt ((k_n)_{n=1}^{\infty},(l_n)_{n=1}^{\infty}))\in \mathcal A_{p}(G)\hskip3pt  \text{such that}$$
$$\hskip3pt u(x)=\sum_{n=1}^{\infty} \big(\overline{k_n} *\hskip1pt\check{l_n} \big)(x)\hskip3pt\text{for every}\hskip3pt x\in G \Big\}.$$For $u\in A_p(G)$ we put
$$\|u\|_{A_p}:=\text{inf}\Bigg\{\sum_{n=1}^{\infty} N_p (k_n) N_{p'}(l_n) : ((k_n)_{n=1}^{\infty},(k_n)_{n=1}^{\infty}) \in \mathcal A_p(G) $$$$\hskip3pt\text{such that}\hskip3pt u=\sum_{n=1}^{\infty}\overline{k_n} *\hskip1pt\check{l_n} \Bigg\}.$$For the pointwise product $A_p(G)$ is a Banach algebra, called the Figà-Talamanca Herz of $G.$ We have $A_p(G)\subset C_{0}(G) $ and for every $u\in A_p(G)$ the following inequality holds:  $\|u\|_{\infty}\leq \|u\|_{A_p}.$ Moreover the space $A_p(G)\cap C_{00}(G)$ is dense in $A_p(G).$

For every   $T\in PM_p(G),$ we put 
$$\Psi_{G}^{p}(T)(u)=\sum_{n=1}^{\infty}\overline{<T[\tau_p k_n],[\tau_{p'}l_n]>}$$for every $u\in A_p(G)$ and for every $((k_n)_{n=1}^{\infty},(k_n)_{n=1}^{\infty})\in \mathcal A_p(G)$ such that
$$u=\sum_{n=1}^{\infty} \overline{k_n}*\check{l_n}$$where $\tau_p \varphi(x)=\varphi(x^{-1}) \Delta_{G}(x^{-1})^{1/p}.$
Then $\Psi_{G}^{p}$ is a conjugate linear isometry of $PM_p(G)$ onto $A_p(G)'$ with the following properties:
\vskip1pt
1.  $\Psi_{G}^{p}(\lambda_{G}^{p}(\tilde{\mu}))= \mu$ for every $\mu \in M^{1}{G},$ where $\tilde{\varphi}(x)= \overline{\varphi(x^{-1})}$ and\vskip3pt $\tilde{\mu}(\varphi)=\overline{\mu(\tilde{\varphi})}.$
 \vskip1pt
 2. $\Psi_{G}^{p}$ is an homeomorphism of $PM_p(G)$, with the ultraweak topology, onto $A_p(G)',$ with the weak topology $\sigma({A_p}', A_p).$
\vskip1pt
We will use the fact (see [1] Chapter 5) that $CV_p(G)$ carries a natural structure of left normed $A_p(G)$-module $(u, T)\mapsto uT.$ We shortly  recall the definition of $uT$: for $k\in \mathcal L^{p}(G),$ $l\in \mathcal L^{p'}(G)$ and $T\in CV_p(G)$ there is a unique linear bounded operator of $L^{p}(G),$ denoted $(\overline{k}*\check{l})T,$ such that
$$\big<\big((\overline{k}*\check{l})T\big)[\varphi],[\psi]\big>
=\int_{G} \big<T[_{t^{-1}}(\check{k}) \varphi],[_{t^{-1}}(\check{l}) \psi]\big> dt$$for every $\varphi, \psi \in C_{00}(G).$ Then, for $u\in A_p(G),$ we put $uT=\displaystyle\sum_{n=1}^{\infty}(\overline{k_n}*\check{l_n})T$ for every $((k_n)_{n=1}^{\infty},(l_n)_{n=1}^{\infty})\in \mathcal A_p(G)$ such that $u=\displaystyle\sum_{n=1}^{\infty} \overline{k_n}*\check{l_n}.$
\section{The general case}
In this paragraph $G$ is a general locally compact group, $G$ is not assumed to be amenable.
\begin{theo} Let $G$ be a locally compact group, $1<p<\infty, \hskip1pt u\in A_p(G),  \hbox{$T \in CV_p(G)$}$ and $\varphi\in L^p(G) \cap L^2(G).$ 

1. $\tau_{p}(uT)\tau_p \varphi \in L^2(G),$

2. $ \tribare{\tau_{p}(uT)\tau_p \varphi}_2 \leq \|u\|_{A_p} \tribare{T}_p \|\varphi\|_2$\hskip2pt.
\end{theo}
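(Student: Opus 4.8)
The plan is to prove the equivalent bilinear estimate
$$\bigl|\langle(\tau_p(uT)\tau_p)[\varphi],[\psi]\rangle\bigr|\le\|u\|_{A_p}\,\tribare{T}_p\,\|\varphi\|_2\,\|\psi\|_2$$
for all $\varphi,\psi\in C_{00}(G)$, where $\langle\cdot,\cdot\rangle$ denotes the bilinear $L^p$--$L^{p'}$ pairing, for which $\tau_p^{*}=\tau_{p'}$ (a one-line check). Statement 2 then follows by taking the supremum over $\psi$ with $\|\psi\|_2\le1$, and for general $\varphi\in L^p(G)\cap L^2(G)$ by density: choose $\varphi_n\in C_{00}(G)$ with $\varphi_n\to\varphi$ simultaneously in $L^p$ and $L^2$; since $\tau_p$ is an isometric involution of $L^p(G)$ and $uT\in CV_p(G)$ with $\tribare{uT}_p\le\|u\|_{A_p}\tribare{T}_p$, the sequence $\tau_p(uT)\tau_p\varphi_n$ converges in $L^p$ to $\tau_p(uT)\tau_p\varphi$ and is Cauchy in $L^2$, so its $L^2$-limit must be $\tau_p(uT)\tau_p\varphi$ --- this yields statement 1 and the bound of statement 2 together. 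Finally, writing $u=\sum_n\overline{k_n}*\check{l_n}$, summing the estimates over $n$ and passing to the infimum reduces matters to $u=\overline k*\check l$ with a single pair $k\in\mathcal L^p(G)$, $l\in\mathcal L^{p'}(G)$, where the right-hand side above is $N_p(k)\,N_{p'}(l)\,\tribare{T}_p\,\|\varphi\|_2\,\|\psi\|_2$.

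The computational heart is a re-expression of the pairing. Unfolding the definition of $(\overline k*\check l)T$ gives
$$\langle(\tau_p(uT)\tau_p)[\varphi],[\psi]\rangle=\int_G\bigl\langle T[({}_{t^{-1}}\check k)\cdot\tau_p\varphi],[({}_{t^{-1}}\check l)\cdot\tau_{p'}\psi]\bigr\rangle\,dt .$$
For fixed $t$ I would replace the integration variable $x$ by $tx$ inside both functions; using that $T$ commutes with left translations, that $\langle{}_af,{}_ag\rangle=\langle f,g\rangle$, and that the powers of $\Delta_G$ produced by $\tau_p$, by $\tau_{p'}$ and by the change of variable combine to a single factor $\Delta_G(t^{-1})^{1/p+1/p'}=\Delta_G(t^{-1})$, one obtains
$$\bigl\langle T[({}_{t^{-1}}\check k)\cdot\tau_p\varphi],[({}_{t^{-1}}\check l)\cdot\tau_{p'}\psi]\bigr\rangle=\Delta_G(t^{-1})\,\bigl\langle S[k\cdot\varphi_{t^{-1}}],\,l\cdot\psi_{t^{-1}}\bigr\rangle ,\qquad S:=\tau_p T\tau_p ,$$
with $S\in\mathcal L(L^p(G))$ and $\tribare{S}_p=\tribare{T}_p$. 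Performing the substitution $t\mapsto t^{-1}$ in the $t$-integral then gives the clean formula
$$\langle(\tau_p(uT)\tau_p)[\varphi],[\psi]\rangle=\int_G\bigl\langle S[k\cdot\varphi_t],\,l\cdot\psi_t\bigr\rangle\,dt .$$

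To estimate the right-hand side I would read it as $\langle(\mathrm{id}_{L^2(G)}\otimes S)F,\,G\rangle$, where $F(x,t)=k(x)\,\varphi(xt)$, $G(x,t)=l(x)\,\psi(xt)$, and $\mathrm{id}\otimes S$ acts on the first variable of $L^p(G_x;L^2(G_t))$. The virtue of this bookkeeping is that, by left invariance of Haar measure, $\int_G|\varphi(xt)|^2\,dt=\|\varphi\|_2^2$ for every $x$, so $\|F\|_{L^p(G_x;L^2(G_t))}=N_p(k)\,\|\varphi\|_2$ and $\|G\|_{L^{p'}(G_x;L^2(G_t))}=N_{p'}(l)\,\|\psi\|_2$. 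Together with the fact that a bounded operator on $L^p(G)$ amplifies to $L^2$-valued functions without increase of norm, i.e. $\tribare{\mathrm{id}_{L^2(G)}\otimes S}_{L^p(G;L^2(G))}=\tribare{S}_p=\tribare{T}_p$ --- which is where the exponent
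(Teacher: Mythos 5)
The paper itself gives no argument for Theorem 1 (it only cites [1], Section 8.3, Theorem 1), so the comparison is really with the standard proof there, and your strategy is in substance that proof. The reductions are all sound: passing to the bilinear form with $\tau_p^{*}=\tau_{p'}$, reducing to a single pair $(k,l)$ by summing over $n$ and taking the infimum over representations of $u$, and extending from $\varphi\in C_{00}(G)$ to $L^p(G)\cap L^2(G)$ by simultaneous density (using $\tribare{uT}_p\le\|u\|_{A_p}\tribare{T}_p$ to identify the $L^2$-limit with the $L^p$-limit). Your key identity $\langle\tau_p(uT)\tau_p[\varphi],[\psi]\rangle=\int_G\langle S[k\varphi_t],[l\psi_t]\rangle\,dt$ with $S=\tau_pT\tau_p$ also checks out: one can verify directly that $({}_{t^{-1}}\check k)\cdot\tau_p\varphi=\tau_p\big(k_t\varphi\big)$, so the modular factors cancel exactly as you say, and the mixed-norm computation $\|F\|_{L^p(G_x;L^2(G_t))}=N_p(k)\|\varphi\|_2$ is correct by left invariance.

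The difficulty is that your text breaks off mid-sentence at precisely the load-bearing step: the claim that a bounded operator $S$ on $L^p(G)$ extends to $L^p(G;L^2(G))$ with the same norm. Everything before it is bookkeeping; this is where the theorem actually lives. The claim is true --- it is the Marcinkiewicz--Zygmund inequality $\big\|\big(\sum_j|Sf_j|^2\big)^{1/2}\big\|_p\le\tribare{S}_p\big\|\big(\sum_j|f_j|^2\big)^{1/2}\big\|_p$, proved for instance by averaging against independent complex Gaussian variables and passing from $\ell^2_n$ to $L^2(G)$ by approximation --- but it is a genuine theorem, special to Hilbert-space-valued extensions (it fails for general $\ell^r$-valued extensions), and it is exactly why the statement holds with exponent $2$ while the paper must then interpolate via Riesz--Thorin in Proposition 2 to reach other exponents $q$ between $p$ and $2$. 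As submitted, this fact is asserted without proof or citation, and the final assembly $|\langle(S\otimes\mathrm{id})F,G\rangle|\le\tribare{S}_p\,N_p(k)\,N_{p'}(l)\,\|\varphi\|_2\,\|\psi\|_2$ (Cauchy--Schwarz in $t$, H\"older in $x$) is never written out; there is also a small amount of unaddressed measurability bookkeeping needed to identify $\int_G\langle S[k\varphi_t],[l\psi_t]\rangle\,dt$ with the vector-valued pairing when $k$ is merely in $\mathcal L^p(G)$. Supply the vector-valued extension lemma (with proof or reference) and the closing estimate, and the argument is complete.
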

\begin{proof}  See [1] section 8.3 Theorem 1 p. 156.
\end{proof}
\setcounter{prop}{1}
\begin{prop} Let $G $ be a locally compact group, $p, q\in \R$ with $p>1,  p\not=2$ and $q$ between $p$ and $2$. We denote by $\mathcal S$ the set
$\{[r] : \hbox{$ r\in \mathcal L^1(G)$}, r\hskip3pt \text{step function}\}.$ Then for every $u\in A_p(G)$
for every $T\in CV_p(G)$ and every $\varphi \in \mathcal S$ we have: for every $1<p<\infty$

1. $\tau_{p}(uT)\tau_p \varphi \in L^q(G),$

2. $ \tribare{\tau_{p}(uT)\tau_p \varphi}_q \leq \|u\|_{A_p} \tribare{T}_p \|\varphi\|_q$\hskip2pt.
\end{prop}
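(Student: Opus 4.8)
The idea is that $\tau_{p}(uT)\tau_{p}$ is a single bounded operator on $L^{p}(G)$ which, by Theorem 1, additionally satisfies an $L^{2}$-estimate on the dense subspace $\mathcal S$, and that the required $L^{q}$-estimate is then obtained by interpolating the two. First I would record that $\tau_{p}$ is an isometric involution of $L^{p}(G)$: the inversion formula $\int_{G}f(x^{-1})\Delta_{G}(x^{-1})\,dx=\int_{G}f(x)\,dx$ gives $\|\tau_{p}\varphi\|_{p}=\|\varphi\|_{p}$, and $\Delta_{G}(x)\Delta_{G}(x^{-1})=1$ gives $\tau_{p}\circ\tau_{p}=\mathrm{id}$. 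Since $uT\in CV_{p}(G)\subseteq\mathcal L(L^{p}(G))$ and $CV_{p}(G)$ is a normed $A_{p}(G)$-module, one has $\tribare{uT}_{p}\le\|u\|_{A_{p}}\tribare{T}_{p}$; hence the operator $\Phi:=\tau_{p}(uT)\tau_{p}$ belongs to $\mathcal L(L^{p}(G))$ with $\tribare{\Phi}_{p}\le C$, where $C:=\|u\|_{A_{p}}\tribare{T}_{p}$.

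Next, a step function $r\in\mathcal L^{1}(G)$ is bounded with support of finite Haar measure, so $[r]\in L^{s}(G)$ for every $s\in[1,\infty]$; in particular $\mathcal S\subseteq L^{p}(G)\cap L^{2}(G)\cap L^{q}(G)$. For $\varphi\in\mathcal S$, Theorem 1 applies with $\varphi\in L^{p}(G)\cap L^{2}(G)$ and yields $\Phi\varphi=\tau_{p}(uT)\tau_{p}\varphi\in L^{2}(G)$ with $\tribare{\Phi\varphi}_{2}\le C\|\varphi\|_{2}$. Since also $\Phi\varphi\in L^{p}(G)$, and since $q$ lies between $p$ and $2$ so that $\|f\|_{q}\le\|f\|_{p}^{1-\theta}\|f\|_{2}^{\theta}$ with $1/q=(1-\theta)/p+\theta/2$ gives the inclusion $L^{p}(G)\cap L^{2}(G)\subseteq L^{q}(G)$, we conclude $\Phi\varphi\in L^{q}(G)$, which is part 1.

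For part 2 I would apply the Riesz--Thorin interpolation theorem to the linear map $\varphi\mapsto\Phi\varphi$ on $\mathcal S$, the space of integrable simple functions, which is dense in $L^{s}(G)$ for every $s<\infty$: on $\mathcal S$ it satisfies $\|\Phi\varphi\|_{p}\le C\|\varphi\|_{p}$ and $\|\Phi\varphi\|_{2}\le C\|\varphi\|_{2}$, so for the $\theta\in(0,1)$ fixed by $1/q=(1-\theta)/p+\theta/2$ (which exists precisely because $q$ is strictly between $p$ and $2$) it satisfies $\tribare{\Phi\varphi}_{q}\le C^{1-\theta}C^{\theta}\|\varphi\|_{q}=C\|\varphi\|_{q}$, that is, $\tribare{\tau_{p}(uT)\tau_{p}\varphi}_{q}\le\|u\|_{A_{p}}\tribare{T}_{p}\|\varphi\|_{q}$. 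The only point deserving a little care is that the two estimates being interpolated must come from one and the same operator restricted to the common dense domain $\mathcal S$; this is immediate, since $uT$ is a bona fide element of $\mathcal L(L^{p}(G))$ and Theorem 1 merely computes the action of $\tau_{p}(uT)\tau_{p}$ on it. I therefore anticipate no real obstacle: the substantive ingredient is the $L^{2}$-estimate of Theorem 1, and the rest is elementary bookkeeping together with a textbook interpolation.
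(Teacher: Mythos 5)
Your proposal is correct and follows essentially the same route as the paper: both rest on the $L^{p}$-bound $\tribare{\tau_{p}(uT)\tau_{p}}_{p}=\tribare{uT}_{p}\leq\|u\|_{A_p}\tribare{T}_p$ from the module structure, the $L^{2}$-bound from Theorem 1, and Riesz--Thorin on step functions at the intermediate exponent $1/q=(1-\theta)/p+\theta/2$. The only cosmetic difference is that the paper writes out the interpolation parameter $t$ explicitly in the two cases $p<2$ and $p>2$, whereas you treat them uniformly.
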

\begin{proof}  
(I) We prove Proposition 2 if $p<2.$

We put$$t=\frac{2(q-p)}{q(2-p)}.$$We have $0<t<1$ and
$$\frac{1}{q}=\frac{1-t}{p} +\frac{t}{2}.$$According to Riesz-Thorin and to Theorem 1\hskip5pt $\tau_{p}(uT)\tau_p \varphi \in L^q(G)$ and
$$ \tribare{\tau_{p}(uT)\tau_p \varphi}_q \leq \tribare{uT}^{1-t}_p \big(\|u\|_{A_p} \tribare{T}_p\big)^{t}\|\varphi\|_q$$but
$$\tribare{uT}^{1-t}_p \big(\|u\|_{A_p} \tribare{T}_p\big)^{t}\|\varphi\|_q
\leq \|u\|_{A_p} ^{1-t} \tribare{T}_p^{1-t}  \|u\|_{A_p}^{t} \tribare{T}_p^{t} \|\varphi\|_q$$
$$= \|u\|_{A_p} \tribare{T}_p \|\varphi\|_q\hskip2pt.$$

(II) We prove Proposition 2 if $p>2.$

We put$$t=\frac{(2-q)p}{(2-p)q}.$$We have $0<t<1$ and
$$\frac{1}{q}= \frac{1-t}{2} +\frac{t}{p}.$$As in (I)  $\tau_{p}(uT)\tau_p \varphi \in L^q(G)$ and
$$ \tribare{\tau_{p}(uT)\tau_p \varphi}_q \leq  \big(\|u\|_{A_p} \tribare{T}_p\big)^{1-t}  \tribare{uT}_p^{t}\|\varphi\|_q $$and consequently  $ \tribare{\tau_{p}(uT)\tau_p \varphi}_q \leq \|u\|_{A_p} \tribare{T}_p \|\varphi\|_q$\hskip2pt.
\end{proof}
\setcounter{theo}{2}
\begin{theo}Let $G $ be a locally compact group, $p, q\in \R$ with $p>1, \hskip2pt p\not=2$ and $q$ between $p$ and $2$. Then for every $u\in A_p(G),$ for every $T\in CV_p(G)$ and every $\varphi\in L^p(G)\cap L^q(G)$ we have:

1. $\tau_{p}(uT)\tau_p \varphi \in L^q(G),$

2. $ \tribare{\tau_{p}(uT)\tau_p \varphi}_q \leq \|u\|_{A_p} \tribare{T}_p \|\varphi\|_q$\hskip2pt.
\end{theo}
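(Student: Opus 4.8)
The plan is to extend Proposition 2 from the class $\mathcal S$ to all of $L^p(G)\cap L^q(G)$ by density. Write $V=\tau_p(uT)\tau_p$. Since $uT\in CV_p(G)\subset\mathcal L(L^p(G))$ (the $A_p(G)$-module structure recalled above) and $\tau_p$ is an isometric involution of $L^p(G)$, the map $V$ is a bounded operator on $L^p(G)$; Proposition 2 asserts precisely that $V$ carries $\mathcal S$ into $L^q(G)$ with $\tribare{V\psi}_q\le\|u\|_{A_p}\tribare{T}_p\|\psi\|_q$ for $\psi\in\mathcal S$. The goal is to push this bound to arbitrary $\varphi\in L^p(G)\cap L^q(G)$.

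The first step is to produce, for such a $\varphi$, a sequence $(\psi_k)$ in $\mathcal S$ with $\psi_k\to\varphi$ \emph{simultaneously} in $L^p(G)$ and in $L^q(G)$. Since $\{\varphi\neq 0\}$ is $\sigma$-finite and $p,q<\infty$, I would first truncate $\varphi$ to the sets $\{1/n\le|\varphi|\le n\}$ intersected with an increasing exhausting sequence of finite-measure sets; the resulting functions are bounded with support of finite measure, converge to $\varphi$ pointwise a.e., and are dominated by $|\varphi|\in L^p(G)\cap L^q(G)$, so by dominated convergence they converge to $\varphi$ in both norms. Each such truncation is then approximated, again by dominated convergence, in both $L^p$ and $L^q$ by simple functions, i.e. by elements of $\mathcal S$; a diagonal extraction yields the desired $(\psi_k)$.

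Next, since $\mathcal S$ is a vector space, $\psi_k-\psi_j\in\mathcal S$, and Proposition 2 gives
$$\tribare{V\psi_k-V\psi_j}_q\le\|u\|_{A_p}\tribare{T}_p\,\|\psi_k-\psi_j\|_q,$$
so $(V\psi_k)$ is Cauchy in $L^q(G)$; let $g\in L^q(G)$ be its limit, and note $\|g\|_q\le\|u\|_{A_p}\tribare{T}_p\|\varphi\|_q$ by letting $k\to\infty$ in Proposition 2's inequality applied to $\psi_k$. On the other hand, $V$ is bounded on $L^p(G)$ and $\psi_k\to\varphi$ in $L^p(G)$, hence $V\psi_k\to V\varphi$ in $L^p(G)$. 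Passing to a subsequence along which $V\psi_k\to V\varphi$ a.e., then to a further subsequence along which $V\psi_k\to g$ a.e., one gets $V\varphi=g$ a.e.; therefore $\tau_p(uT)\tau_p\varphi=g\in L^q(G)$ and $\tribare{\tau_p(uT)\tau_p\varphi}_q\le\|u\|_{A_p}\tribare{T}_p\|\varphi\|_q$, which is the assertion.

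I expect the only genuinely delicate point to be the simultaneous $L^p$–$L^q$ approximation by step functions; everything after that is a routine identification of limits. The reason the two-norm approximation is needed — rather than mere $L^q$-density of $\mathcal S$ — is that $V$ is a priori only defined and bounded on $L^p(G)$, so the $L^p$-convergence is what pins down the limit $g$ as $V\varphi$, while the $L^q$-convergence supplies the estimate.
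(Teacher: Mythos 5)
Your proof is correct. The paper in fact states Theorem 3 without giving any proof, presenting it as the routine extension of Proposition 2 from step functions to $L^p(G)\cap L^q(G)$; your density argument --- simultaneous $L^p$--$L^q$ approximation of $\varphi$ by elements of $\mathcal S$, the Cauchy estimate in $L^q$ supplied by Proposition 2, and identification of the limit through the $L^p$-boundedness of $\tau_p(uT)\tau_p$ together with a.e.\ convergent subsequences --- is exactly the argument the author leaves to the reader, and all of its steps check out.
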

\begin{defi} Let $G $ be a locally compact group, $p, q\in \R$ with $p>1, \hskip2pt p\not=2$ and $q$ between $p$ and $2$. We denote by $E_{p,q}$ the set of all $T\in CV_p(G)$ such that :

1. $\tau_p T \tau_p \varphi \in L^q(G)$ for every $\varphi \in L^p(G) \cap L^q(G) ,$

2. There is $C>0$ such that $ \tribare{\tau_{p} T \tau_p \varphi}_q \leq C\|\varphi\|_q$ for every $\varphi \in L^p(G) \cap L^q(G) .$

\end{defi}
\setcounter{prop}{3}
\begin{prop}Let $G $ be a locally compact group, $p, q\in \R$ with $p>1, \hskip1pt p\not=2$ and $q$ between $p$ and $2$. The set $E_{p,q}$ is a subalgebra of $CV_p(G).$
\end{prop}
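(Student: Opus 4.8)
The plan is to verify the two closure properties directly from Definition 3, namely that $E_{p,q}$ is stable under linear combinations and under composition of operators in $\mathcal L(L^p(G))$. Stability under scalar multiples is immediate (multiply the constant $C$ by $|\lambda|$), and stability under addition follows from the triangle inequality in $L^q(G)$: if $\tau_p S\tau_p\varphi$ and $\tau_p T\tau_p\varphi$ both lie in $L^q(G)$ with the bounds $C_S\|\varphi\|_q$ and $C_T\|\varphi\|_q$, then $\tau_p(S+T)\tau_p\varphi=\tau_p S\tau_p\varphi+\tau_p T\tau_p\varphi\in L^q(G)$ with bound $(C_S+C_T)\|\varphi\|_q$; here one uses that $\tau_p$ is (up to the unimodular-type weight $\Delta_G^{1/p}$) a linear map and that $S+T$ is again in $CV_p(G)$ since $CV_p(G)$ is a subspace. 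The substantive part is closure under products.

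For the product, let $S,T\in E_{p,q}$ with constants $C_S,C_T$, and fix $\varphi\in L^p(G)\cap L^q(G)$. First I would observe that $\tau_p$ is an involutive isometry of $L^q(G)$ onto itself (and likewise of $L^p(G)$ onto itself), since $\tau_p\psi(x)=\psi(x^{-1})\Delta_G(x^{-1})^{1/p}$ and the change of variables $x\mapsto x^{-1}$ converts $\Delta_G$ to the correct Jacobian; more precisely $\tau_q$ is the isometry on $L^q$, but the weight exponent in $\tau_p$ is chosen so that $\tau_p$ maps $L^p\to L^p$ isometrically, and the key compatibility is that $\tau_p^2=\mathrm{id}$. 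The idea is then to write
$$\tau_p(ST)\tau_p\varphi=\tau_p S\tau_p\cdot\tau_p T\tau_p\varphi=(\tau_p S\tau_p)\bigl(\tau_p T\tau_p\varphi\bigr),$$
using $\tau_p\tau_p=\mathrm{id}$ in the middle. By hypothesis on $T$, the function $\psi:=\tau_p T\tau_p\varphi$ lies in $L^q(G)$ with $\|\psi\|_q\le C_T\|\varphi\|_q$. To apply the hypothesis on $S$ to $\psi$, I need $\psi\in L^p(G)\cap L^q(G)$, not merely $\psi\in L^q(G)$ — this is the main obstacle.

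To overcome it, I would invoke Theorem 1 (or rather the structure behind it): for $T\in CV_p(G)$ the operator $\tau_p T\tau_p$ also maps $L^p(G)$ into itself boundedly — indeed $\tau_p T\tau_p$ is, up to the isometry $\tau_p$, just conjugation of a $p$-convolution operator, and since $CV_p(G)$ is closed under this operation (it corresponds to passing from $T$ to a related convolution operator, cf. the adjoint/transpose relations in [1, Chapter 5]), we get $\tau_p T\tau_p\in\mathcal L(L^p(G))$ with $\tribare{\tau_p T\tau_p}_p=\tribare{T}_p$. Hence $\psi=\tau_p T\tau_p\varphi\in L^p(G)$ as well, so $\psi\in L^p(G)\cap L^q(G)$, and the hypothesis on $S$ applies: $\tau_p S\tau_p\psi\in L^q(G)$ with $\tribare{\tau_p S\tau_p\psi}_q\le C_S\|\psi\|_q\le C_SC_T\|\varphi\|_q$. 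Combined with the displayed identity this gives $\tau_p(ST)\tau_p\varphi\in L^q(G)$ and $\tribare{\tau_p(ST)\tau_p\varphi}_q\le C_SC_T\|\varphi\|_q$, with $ST\in CV_p(G)$ because $CV_p(G)$ is an algebra; thus $ST\in E_{p,q}$ and $E_{p,q}$ is a subalgebra. I expect the only delicate point to be the clean justification that $\tau_p T\tau_p$ is again a bounded operator on $L^p(G)$ (equivalently that $\tau_p T\tau_p\in CV_p(G)$ or at least in $\mathcal L(L^p(G))$), which is where the representation of $CV_p(G)$ via convolution and the behaviour of $\tau_p$ under it must be used carefully.
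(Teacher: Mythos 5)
Your argument is correct and is essentially the paper's own proof: both reduce the product case to the identity $\tau_p(ST)\tau_p=(\tau_p S\tau_p)(\tau_p T\tau_p)$ via $\tau_p^2=\mathrm{id}$, note that $\tau_p T\tau_p\varphi$ lies in $L^p(G)\cap L^q(G)$ (the $L^p$ part because $\tau_p$ is an isometric involution of $L^p(G)$ and $T$ is bounded there --- no appeal to the convolution structure is actually needed), and then apply the defining estimate for $S$ to get the bound $CC'\|\varphi\|_q$. The only difference is that you spell out the linearity and the membership $\tau_p T\tau_p\varphi\in L^p(G)$, which the paper treats as immediate.
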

\begin{proof} Let $S, T\in E_{p,q}$ , $C, C'$
 the constants of Definition 1 and $\varphi\in L^p(G) \cap L^q(G.$ From $\tau_p T \tau_p \varphi \in L^p(G) \cap L^q(G)$ it follows $ (\tau_p S \tau_p)( \tau_p T \tau_p \varphi) \in L^p(G) \cap L^q(G)$
and consequently  $\tau_p ST \tau_p \varphi \in L^p(G) \cap L^q(G).$ Moreover
$$\|\tau_p ST \tau_p \varphi\|_q=\|\tau_p S \tau_p(\tau_p T \tau_p \varphi)\|_{q} \leq C \|\tau_p T \tau_p \varphi\|_q \leq C C' \|\varphi\|_q$$this implies that $ST\in E_{p,q}\hskip2pt.$
\end{proof}
\begin{prop}Let $G $ be a locally compact group, $p, q\in \R$ with $p>1, \hskip1pt p\not=2$ and $q$ between $p$ and $2$. For every $\mu \in M^1(G)$ we have $\lambda_{G}^p(\mu)\in E_{p,q}$ and for every $\varphi\in L^p(G) \cap L^q(G)$ $\tau_p \lambda_{G}^p (\mu) \tau_p \varphi=\rho_{G}^q (\mu) \varphi.$

\end{prop}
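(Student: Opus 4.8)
The plan is to establish the identity $\tau_p\lambda_G^p(\mu)\tau_p\varphi=\rho_G^q(\mu)\varphi$ first for $\varphi$ in the dense subspace $C_{00}(G)$ of $L^q(G)$ by a direct computation, and then to extend it to every $\varphi\in L^p(G)\cap L^q(G)$ by approximation; once the identity is available, the membership $\lambda_G^p(\mu)\in E_{p,q}$ follows at once from the known boundedness $\tribare{\rho_G^q(\mu)}_q\le\|\mu\|$ of $\rho_G^q(\mu)$ on $L^q(G)$ (the $q$-analogue of the estimate $\tribare{\lambda_G^p(\mu)}_p\le\|\mu\|$ recalled in the introduction).

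\emph{Step 1 (the pointwise identity on $C_{00}(G)$).} For $\varphi\in C_{00}(G)$ the function $\tau_p\varphi$ again lies in $C_{00}(G)$, so $\lambda_G^p(\mu)\tau_p\varphi=\tau_p\varphi*(\Delta_G^{1/p'}\check\mu)$ is a genuine continuous function and the three operations can be composed literally. I would write $(\tau_p\lambda_G^p(\mu)\tau_p\varphi)(x)=\bigl(\tau_p\varphi*(\Delta_G^{1/p'}\check\mu)\bigr)(x^{-1})\,\Delta_G(x^{-1})^{1/p}$, expand the convolution against the measure $\Delta_G^{1/p'}\check\mu$, use $\int f\,d\check\mu=\int f(t^{-1})\,d\mu(t)$ to turn $\check\mu$ into $\mu$, and finally expand $\tau_p\varphi$ inside the integral. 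Then one checks that all the resulting powers of $\Delta_G$ — the two powers $1/p$ from the two applications of $\tau_p$, the power $1/p'$ from the factor $\Delta_G^{1/p'}$ in the definition of $\lambda_G^p(\mu)$, and the remaining integer powers produced by convolving with a measure and by the substitution $t\mapsto t^{-1}$ — telescope away; in particular every exponent involving $p$ cancels and one is left with $(\tau_p\lambda_G^p(\mu)\tau_p\varphi)(x)=\int_G\varphi(t^{-1}x)\,d\mu(t)=(\mu*\varphi)(x)$, which is precisely $\rho_G^q(\mu)\varphi(x)$. As a consistency check, for $\mu=\delta_a$ the stated formula $\lambda_G^p(\delta_a)\psi=\psi_a\Delta_G(a)^{1/p}$ yields directly $\tau_p\lambda_G^p(\delta_a)\tau_p\varphi={}_a\varphi$, again with no surviving $p$. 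If the bookkeeping for a general $\mu$ is uncomfortable one may first treat point masses, or measures with a $C_{00}(G)$ density, and then pass to arbitrary $\mu\in M^1(G)$ using that $\nu\mapsto\lambda_G^p(\nu)$ and $\nu\mapsto\rho_G^q(\nu)$ are both norm-continuous on $M^1(G)$.

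\emph{Step 2 (boundedness and extension to $L^p(G)\cap L^q(G)$).} Since $\tau_p$ is an isometric involution of $L^p(G)$ (change of variables $x\mapsto x^{-1}$), $\tau_p\lambda_G^p(\mu)\tau_p$ is a bounded operator on $L^p(G)$, whereas $\rho_G^q(\mu)$ is a bounded operator on $L^q(G)$. Given $\varphi\in L^p(G)\cap L^q(G)$ I would pick $(\varphi_n)\subset C_{00}(G)$ with $\varphi_n\to\varphi$ simultaneously in $L^p(G)$ and in $L^q(G)$ (truncate $\varphi$ to $\{1/n\le|\varphi|\le n\}$ intersected with a large compact set, apply dominated convergence in both norms, and approximate each truncation by a continuous compactly supported function close to it in both norms via Lusin's theorem). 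Then $\tau_p\lambda_G^p(\mu)\tau_p\varphi_n\to\tau_p\lambda_G^p(\mu)\tau_p\varphi$ in $L^p(G)$, while by Step 1 the left-hand side equals $\rho_G^q(\mu)\varphi_n$, which converges to $\rho_G^q(\mu)\varphi$ in $L^q(G)$. Passing to a common subsequence converging almost everywhere identifies the two limits, so $\tau_p\lambda_G^p(\mu)\tau_p\varphi=\rho_G^q(\mu)\varphi$ as an element of $L^q(G)$; this is the asserted identity, it gives property 1 of Definition 1, and $\|\tau_p\lambda_G^p(\mu)\tau_p\varphi\|_q=\|\rho_G^q(\mu)\varphi\|_q\le\|\mu\|\,\|\varphi\|_q$ gives property 2 with $C=\|\mu\|$, whence $\lambda_G^p(\mu)\in E_{p,q}$.

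\emph{Expected main obstacle.} The only genuinely delicate point is the modular-function bookkeeping in Step 1: one must keep the conventions for $\psi*\nu$, for $\check\mu$, for the weighted measure $\Delta_G^{1/p'}\check\mu$ and for the inversion change of variables mutually consistent, so that the cancellation of all the $\Delta_G$-powers — and in particular the complete disappearance of $p$ on the right-hand side — is transparent. Everything else (the simultaneous $L^p$–$L^q$ density of $C_{00}(G)$, the estimate $\tribare{\rho_G^q(\mu)}_q\le\|\mu\|$, and the subsequence argument) is routine.
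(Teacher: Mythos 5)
Your proposal is correct and follows essentially the same route as the paper: verify the identity $\tau_p\lambda_G^p(\mu)\tau_p\varphi=\rho_G^q(\mu)\varphi$ by direct computation (with the modular factors cancelling) for $\varphi\in C_{00}(G)$, extend to $L^p(G)\cap L^q(G)$ by density, and then read off membership in $E_{p,q}$ from $\|\rho_G^q(\mu)\varphi\|_q\le\|\mu\|\,\|\varphi\|_q$. The paper merely states these steps more tersely; your only deviations are notational (you land on $\mu*\varphi$ where the paper writes $\varphi*\mu$, a matter of the convention for convolution with a measure and for $\rho_G^q$), which does not affect the argument.
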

\begin{proof} For every $\varphi \in C_{00}(G)$  we have $\tau_{p}\big(\tau_p\varphi *\Delta_{G} ^{1/{p'}}\check \mu\big)=\varphi *\mu.$ Let $\varphi \in L^p(G) \cap L^q(G)$ we obtain
$\tau_p \lambda_{G}^p (\mu) \tau_p \varphi=\rho_{G}^q (\mu) \varphi.$ But $\rho_{G}^q (\mu) \varphi$ belongs to $L^q(G)$ and therefore $\tau_p \lambda_{G}^p (\mu) \tau_p \varphi \in L^q(G).$
Finally the inequality $\| \rho_{G}^q (\mu) \varphi\|_q \leq \|\mu\| \|\varphi\|_q$ implies $\|\tau_p \lambda_{G}^p (\mu) \tau_p \varphi \|_q \leq \|\mu\| \|\varphi\|_q.$

\end{proof}
\begin{prop}Let $G $ be a locally compact group, $p, q\in \R$ with $p>1, \hskip1pt p\not=2$ and $q$ between $p$ and $2$. For every $T\in E_{p,q}$ there is a unique $S\in \mathcal L(L^q(G))$ such that $S\varphi= \tau_pT\tau_p \varphi$ for every $\varphi\in L^p(G) \cap L^q(G).$ We have $\tau_q S \tau_q \in CV_q(G).$

\end{prop}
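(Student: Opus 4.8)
The plan is to build $S$ as the continuous extension of the densely defined operator $\varphi\mapsto\tau_pT\tau_p\varphi$, and then to transport the convolution‑operator property of $T$ through the two reflection maps $\tau_p$ and $\tau_q$.

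First I would settle existence and uniqueness. By the definition of $E_{p,q}$ the linear map $S_0\colon\varphi\mapsto\tau_pT\tau_p\varphi$ sends $L^p(G)\cap L^q(G)$ into $L^q(G)$ and satisfies $\|S_0\varphi\|_q\leq C\|\varphi\|_q$. Since $C_{00}(G)\subset L^p(G)\cap L^q(G)$ and $C_{00}(G)$ is dense in $L^q(G)$, the operator $S_0$ extends uniquely to some $S\in\mathcal L(L^q(G))$; and any $S$ satisfying $S\varphi=\tau_pT\tau_p\varphi$ on $L^p(G)\cap L^q(G)$ is determined by this same density, which gives uniqueness.

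Next I would record the elementary intertwining identities between the reflection maps and the one‑sided translations: for every $a\in G$ and every $r\in(1,\infty)$,
$$\tau_r(\varphi_{a^{-1}})=\Delta_G(a)^{1/r}\,{}_a(\tau_r\varphi),\qquad \tau_r({}_a\psi)=\Delta_G(a)^{-1/r}\,(\tau_r\psi)_{a^{-1}},$$
each being a pointwise identity that holds in $L^r(G)$ because $\tau_r$ and both one‑sided translations are bounded on $L^r(G)$. Applying the first identity with $r=p$, then $T\in CV_p(G)$, then the second identity with $r=p$, I obtain for $\varphi\in L^p(G)\cap L^q(G)$
$$S(\varphi_{a^{-1}})=\tau_pT\tau_p(\varphi_{a^{-1}})=\Delta_G(a)^{1/p}\,\tau_p\big({}_a(T\tau_p\varphi)\big)=(S\varphi)_{a^{-1}}.$$
As right translation by $a^{-1}$ and $S$ are bounded on $L^q(G)$ and $L^p(G)\cap L^q(G)$ is dense, this yields $S(\psi_a)=(S\psi)_a$ for all $\psi\in L^q(G)$ and all $a\in G$, i.e. $S$ commutes with every right translation.

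Finally I would check $\tau_qS\tau_q\in CV_q(G)$ directly: for $\psi\in L^q(G)$ and $a\in G$, use $\tau_q({}_a\psi)=\Delta_G(a)^{-1/q}(\tau_q\psi)_{a^{-1}}$, then the right‑translation invariance of $S$ just established, then $\tau_q(\chi_{a^{-1}})=\Delta_G(a)^{1/q}\,{}_a(\tau_q\chi)$ with $\chi=S\tau_q\psi$; the two modular factors cancel and one is left with $(\tau_qS\tau_q)({}_a\psi)={}_a\big((\tau_qS\tau_q)\psi\big)$. The only point requiring care is the bookkeeping of the $\Delta_G$‑exponents and verifying that each manipulation of $S$ outside $L^p(G)\cap L^q(G)$ is obtained legitimately by the density/continuity argument; I do not expect a genuine obstacle here.
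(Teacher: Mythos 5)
Your proposal is correct. The paper gives no proof of this proposition at all (it passes directly from the statement to Definition 2), so there is nothing to compare against; your argument --- extend the densely defined operator $\varphi\mapsto\tau_pT\tau_p\varphi$ by the bound in Definition 1 and the density of $C_{00}(G)\subset L^p(G)\cap L^q(G)$ in $L^q(G)$, then use the intertwining identities $\tau_r(\varphi_{a^{-1}})=\Delta_G(a)^{1/r}\,{}_a(\tau_r\varphi)$ and $\tau_r({}_a\psi)=\Delta_G(a)^{-1/r}(\tau_r\psi)_{a^{-1}}$ to convert left invariance of $T$ into right invariance of $S$ and back into left invariance of $\tau_qS\tau_q$ --- is the natural one the author leaves implicit, and the modular factors cancel exactly as you claim.
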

\begin{defi}Let $G $ be a locally compact group, $p, q\in \R$ with $p>1, \hskip1pt p\not=2$ and $q$ between $p$ and $2$, $T\in E_{p,q}$ and $S$ as in Proposition 6. We put $\alpha_{p,q}(T)= \tau_q S \tau_q.$
\end{defi}
\setcounter{theo}{6}
\begin{theo}Let $G $ be a locally compact group, $p, q\in \R$ with $p>1, \hskip1pt p\not=2$ and $q$ between $p$ and $2$. Then:

1. $\alpha_{p,q}$ is a monomorphism of the algebra $E_{p,q}$ into $CV_q(G),$

2.  $\alpha_{p,q}(\lambda_{G}^p(\mu))=\lambda_{G}^q(\mu)$ for every $\mu\in M^1(G),$

3. for every $u\in A_p(G)$ and for every $T\in CV_p(G)$ we have $uT \in E_{p,q}$ and $\tribare{\alpha_{p,q}(uT)}_q \leq \|u\|_{A_p} \tribare{T}_p,$

4. for every $T\in PM_p(G) \cap E_{p,q}$ we have $\alpha_{p,q}(T)\in PM_q(G).$

\end{theo}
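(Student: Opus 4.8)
The plan is to establish the four assertions in the order listed, leaning on the constructions $\alpha_{p,q}$, $E_{p,q}$, and the interpolation results (Theorem 3, Proposition 5) already available.

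First, for assertion 1, I would show $\alpha_{p,q}$ is an algebra homomorphism and injective. Linearity is immediate from the definition $\alpha_{p,q}(T)=\tau_q S\tau_q$, where $S$ is the operator of Proposition 6 with $S\varphi=\tau_p T\tau_p\varphi$ on $L^p(G)\cap L^q(G)$; the correspondence $T\mapsto S$ is linear because $\tau_p$ is linear. For multiplicativity, given $S,T\in E_{p,q}$ with associated $L^q$-operators $S_0,T_0$, one checks on the dense subspace $L^p(G)\cap L^q(G)$ that $(ST)_0=S_0T_0$ using $\tau_p^2=\mathrm{id}$ (more precisely the fact, implicit in the proof of Proposition 4, that $\tau_p T\tau_p$ maps $L^p\cap L^q$ into itself so the composition makes sense), and then $\alpha_{p,q}(ST)=\tau_q S_0T_0\tau_q=(\tau_q S_0\tau_q)(\tau_q T_0\tau_q)=\alpha_{p,q}(S)\alpha_{p,q}(T)$ since $\tau_q^2=\mathrm{id}$. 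For injectivity: if $\alpha_{p,q}(T)=0$ then $S_0=0$, so $\tau_p T\tau_p\varphi=0$ for all $\varphi\in L^p\cap L^q$; since $\tau_p$ is a bijection of $L^p(G)$ and $L^p\cap L^q$ is dense in $L^p(G)$, continuity of $T$ forces $T=0$.

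Assertion 2 follows by combining Proposition 5 with Definition 2: Proposition 5 says $\tau_p\lambda_G^p(\mu)\tau_p\varphi=\rho_G^q(\mu)\varphi$ for $\varphi\in L^p\cap L^q$, so the operator $S$ of Proposition 6 attached to $T=\lambda_G^p(\mu)$ is exactly $\rho_G^q(\mu)$ (acting on $L^q(G)$), whence $\alpha_{p,q}(\lambda_G^p(\mu))=\tau_q\rho_G^q(\mu)\tau_q$. It then remains to identify $\tau_q\rho_G^q(\mu)\tau_q$ with $\lambda_G^q(\mu)$; this is the same elementary computation that appeared in the proof of Proposition 5 in the form $\tau_q(\tau_q\varphi*\Delta_G^{1/q'}\check\mu)=\varphi*\mu$, now read in the $q$-variable, and it is valid first on $C_{00}(G)$ and then by density and the bounds $\tribare{\cdot}_q\le\|\mu\|$ on all of $L^q(G)$. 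Assertion 3 is a direct consequence of Theorem 3 together with the definitions: Theorem 3 gives precisely that $uT\in E_{p,q}$ (its conditions 1–2 are Definition 1 with constant $C=\|u\|_{A_p}\tribare{T}_p$), and the operator norm of $\alpha_{p,q}(uT)=\tau_q S\tau_q$ equals $\tribare{S}_q$ because $\tau_q$ is an isometric bijection of $L^q(G)$; finally $\tribare{S}_q$ is the best constant $C$ in Definition 1(2), which Theorem 3(2) bounds by $\|u\|_{A_p}\tribare{T}_p$.

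The real work is assertion 4: showing $\alpha_{p,q}$ carries $PM_p(G)\cap E_{p,q}$ into $PM_q(G)$. Here I would argue by an ultraweak density/continuity argument. By definition $PM_p(G)$ is the ultraweak closure of $\lambda_G^p(M^1(G))$; given $T\in PM_p(G)\cap E_{p,q}$, take a net $\lambda_G^p(\mu_i)$ converging ultraweakly to $T$ in $\mathcal L(L^p(G))$, which by the isometry $\Psi_G^p$ may be taken bounded in $\tribare{\cdot}_p$ (using the weak-$*$ compactness of balls in $A_p(G)'=PM_p(G)$). I would like to pass to the limit and conclude $\alpha_{p,q}(\lambda_G^p(\mu_i))=\lambda_G^q(\mu_i)\to\alpha_{p,q}(T)$ ultraweakly in $\mathcal L(L^q(G))$, which would put $\alpha_{p,q}(T)$ in the ultraweak closure of $\lambda_G^q(M^1(G))=PM_q(G)$. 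The obstacle, and the step I expect to demand the most care, is that the net $\lambda_G^q(\mu_i)$ need not be $\tribare{\cdot}_q$-bounded for a general (non-amenable) $G$, so ultraweak convergence on the $L^q$ side is not automatic; one must instead test against pairs $((k_n),(l_n))\in\mathcal A_q(G)$ with $k_n,l_n\in C_{00}(G)$ — or better, against $L^p\cap L^q$ functions — and translate the $L^q$-pairing $\langle\alpha_{p,q}(T)[\tau_q k],[\tau_q l]\rangle$ back through the definitions $\alpha_{p,q}(T)=\tau_q S\tau_q$, $S\varphi=\tau_p T\tau_p\varphi$ into an $L^p$-pairing $\langle T[\tau_p k'],[\tau_p l']\rangle$ to which ultraweak convergence on the $L^p$ side applies. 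Making the required functions lie in $L^p(G)\cap L^{p'}(G)\cap L^q(G)\cap L^{q'}(G)$ simultaneously (so that all the $\tau$-twists and pairings are legitimate) is the technical heart; once the pairings match up, assertion 2 (which gives $\alpha_{p,q}(\lambda_G^p(\mu_i))=\lambda_G^q(\mu_i)$) closes the argument.
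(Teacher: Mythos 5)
Your treatment of assertions 1--3 is essentially the paper's own argument: linearity and multiplicativity of $T\mapsto S\mapsto\tau_qS\tau_q$ checked on $L^p(G)\cap L^q(G)$, injectivity via density (the paper tests on $T[r]=T\tau_p(\tau_p[r])$ for $r\in C_{00}(G)$, which is equivalent to your density remark), assertion 2 from Proposition 5 plus the identity $\tau_q\rho_G^q(\mu)\tau_q=\lambda_G^q(\mu)$, and assertion 3 directly from Theorem 3. No objection there.

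Assertion 4 is where your proposal has a genuine, unfilled gap --- one you yourself flag as ``the technical heart'' but do not resolve. Approximating $T$ ultraweakly by $\lambda_G^p(\mu_i)$ and trying to push the net through $\alpha_{p,q}$ cannot work as stated: ultraweak convergence of $\lambda_G^q(\mu_i)$ to $\alpha_{p,q}(T)$ in $\mathcal L(L^q(G))$ requires convergence of the sums $\sum_n\langle\,\cdot\,[k_n],[l_n]\rangle$ for \emph{every} $((k_n),(l_n))\in\mathcal A_q(G)$, and without a uniform bound on $\tribare{\lambda_G^q(\mu_i)}_q$ (which is exactly what fails for non-amenable $G$) you cannot interchange $\lim_i$ with the infinite sum, even though the translation of each individual pairing back to the $L^p$ side does give termwise convergence. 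The paper avoids taking any limit at all. It uses a static duality characterization of $PM_q(G)$ ([1], Corollary 8, section 4.1, p.~52): an operator $V$ belongs to $PM_q(G)$ as soon as $\sum_n\langle V[\tau_qk_n],[\tau_{q'}l_n]\rangle_{L^q,L^{q'}}=0$ for every $((k_n),(l_n))\in\mathcal A_q(G)$ with $k_n,l_n\in C_{00}(G)$ and $\sum_n\overline{k_n}*\check{l_n}=0$. Combining the identity $\langle\alpha_{p,q}(T)[\tau_qk_n],[\tau_{q'}l_n]\rangle_{L^q,L^{q'}}=\langle T[\tau_pk_n],[\tau_{p'}l_n]\rangle_{L^p,L^{p'}}$ (valid for $k_n,l_n\in C_{00}(G)$, by the very definition of $\alpha_{p,q}$) with the corresponding vanishing statement for $T\in PM_p(G)$ ([1], Lemma 5, section 4.1, p.~48) gives $\alpha_{p,q}(T)\in PM_q(G)$ directly. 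Your instinct to ``translate the $L^q$-pairing back into an $L^p$-pairing'' is precisely the right computation, but it must be fed into this annihilator criterion for $PM_q(G)$ rather than into a net-convergence argument; as written, your proof of assertion 4 does not close.
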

\begin{proof}

(I) We prove the assertion 1.

Let $T, T'\in E_{p,q}$  $C, C'>0$\hskip5pt $S, S' \in \mathcal L(L^q(G))$ such that $\|\tau_p T \tau_p \varphi\|_q \leq C\|\varphi\|_q$, $\|\tau_p T' \tau_p \varphi\|_q \leq C'\|\varphi\|_q$, $S\varphi =\tau_pT \tau_p \varphi,$ and $S'\varphi =\tau_pT' \tau_p \varphi$ for every $\varphi \in L^p(G)\cap L^q(G).$ We have $\alpha_{p,q}(T)=\tau_qS\tau_q$  and $\alpha_{p,q}(T')=\tau_qS'\tau_q.$

$(\text{I})_{1}$  $\alpha_{p,q}(T+T')= \alpha_{p,q}(T)+ \alpha_{p,q}(T').$

There is $S''\in \mathcal L(L^q(G))$ such that $\tau_p(T+T') \tau_p \varphi=S''\varphi$ for every $\varphi \in L^p(G)\cap L^q(G).$ We have $\alpha_{p,q}(T+T')=\tau_qS''\tau_q.$ For $ \varphi \in L^p(G)\cap L^q(G)$ we get $\tau_p(T+T')\tau_p \varphi=\tau_p(T\tau_p\varphi +T'\tau_p\varphi)=\tau_pT\tau_p\varphi+\tau_pT'\tau_p\varphi=S\varphi+S'\varphi=S''\varphi.$ Consequently $S+S'=S''.$Thus $\alpha_{p,q}(T)+\alpha_{p,q}(T')= \alpha_{p,q}(T+T').$

$(\text{I})_{2}$ $\alpha_{p,q} (\gamma T)=\gamma \hskip2pt\alpha_{p,q}(T)$ for $\gamma \in \C$ and $\alpha_{p,q}(TT')= \alpha_{p,q}(T) \alpha_{p,q}(T').$

The proof of $(\text{I})_{2}$ is similar to the one of $(\text{I})_{1}.$

  $(\text{I})_{3}$ If $\alpha_{p,q} ( T)=0$ then $T=0.$
  
  We have $\tau_q S\tau_q=0,$ this implies $S=0.$ For every $\varphi \in L^p(G)\cap L^q(G)$
  $S\varphi=\tau_pT\tau_p \varphi,$ consequently $\tau_pT\tau_p \varphi=0,$ hence $T\tau_p\varphi=0.$ Consider $r\in C_{00}(G),$ we have $T[r]=T\tau_p\tau_p[r]$, taking into  account that 
  $\tau_p[r] \in L^p(G)\cap L^q(G)$ we get $T\tau_p\tau_p[r]=0$ i.e $T[r]=0$ and finally $T=0.$
  \vskip5pt
  (II) The property 2. is verified.
  
  According to Proposition 5   $\lambda_{G}^p(\mu)\in E_{p,q}.$ Let $S\in \mathcal L(L^q(G))$ such that $S\varphi=\tau_p \lambda_{G}^p(\mu)\tau_p \varphi$ for every $\varphi \in L^p(G)\cap L^q(G),$ we have $\alpha_{p,q}(\lambda_{G}^p(\mu))=\tau_qS\tau_q.$ By Proposition 3 for every $\varphi \in L^p(G)\cap L^q(G)$ we have $\tau_p \lambda_{G}^p(\mu)\tau_p \varphi=\rho_{G}^q(\mu)\varphi.$ Consequently $S\varphi=\rho_{G}^q(\mu)\varphi$ every $\varphi \in L^p(G)\cap L^q(G),$ this implies $S=\rho_{G}^q(\mu).$ We obtain $\tau_q S \tau_q =\tau_q\rho_{G}^q(\mu) \tau_q.$ It is straightforward to verify that $\tau_q\rho_{G}^q(\mu) \tau_q=\lambda_{G}^q(\mu),$ we finally conclude that  $\alpha_{p,q}(\lambda_{G}^p(\mu))=\lambda_{G}^q(\mu).$
  \vskip5pt
  (III) The property 3. is verified.

  According to Theorem 3 $uT\in E_{p,q}.$ Let $S\in \mathcal L(L^q(G))$ such that $S\varphi=\tau_p \lambda_{G}^p(\mu)\tau_p \varphi$ for every $\varphi \in L^p(G)\cap L^q(G).$ By Theorem 3 again  
 for every $\varphi \in L^p(G)\cap L^q(G)$ $\|S\varphi\|_q \leq \|u\|_{A_p} \tribare{T}_p \|\varphi\|_q.$
  Consequently for every $\varphi\in L^q(G)$ we have $\|S\varphi\|_q \leq \|u\|_{A_p} \tribare{T}_p \|\varphi\|_q.$ It follows that  for every $\varphi\in L^q(G)$ we have $\|\tau_q S \tau_q \varphi\|_q=\|S\tau_q \varphi\|_q \leq u\|_{A_p} \tribare{T}_p \|\tau_q\varphi\|_q=\| u\|_{A_p} \tribare{T}_p \|\varphi\|_q,$ thus $\tribare{\tau_q S \tau_q}_q \leq \tribare{T}_p \|u\|_{A_p}.$ We finally have $\tribare{\alpha_{p,q}(T)}_q=\tribare{\tau_q S \tau_q}_q \leq \tribare{T}_p \|u\|_{A_p}.$
  \vskip5pt
(IV) Proof of 4.

Let $\big((k_n)_{n=1}^{\infty},(l_n)_{n=1}^{\infty}\big) \in \mathcal A_{q}(G)$ such that $k_n, l_n \in C_{00}(G)$ for every $n\geq 1$ and such that $$\sum_{n=1}^{\infty} \overline{k_n} * \check{l_n}=0.$$We have $\big<\alpha_{p,q}(T)[\tau_q k_n], [\tau_{q'} l_n]\big>_{L^q,L^{q'}}=\big<T[\tau_p k_n], [\tau_{p'} l_n]\big>_{L^p,L^{p'}}$ for every $n\geq 1.$ By  [1] Lemma 5 section 4.1 page 48
$$\sum_{n=1}^{\infty}\big<T[\tau_p k_n], [\tau_{p'} l_n]\big>_{L^p,L^{p'}} =0$$and therefore
$$\sum_{n=1}^{\infty}\big<\alpha_{p,q}(T)[\tau_{q} k_n], [\tau_{q'} l_n]\big>_{L^q,L^{q'}} =0.$$ The Corollary 8 of [1] section 4.1 page 52 implies that $\alpha_{p,q}(T)\in PM_q(G).$ 

\end{proof}
\vskip5pt
The following theorem is the main result of the paper.
\begin{theo} Let $G $ be a locally compact group, $p, q\in \R$ with $p>1, \hskip1pt p\not=2$ and $q$ between $p$ and $2$. Let $u$ be a function of $A_q(G)$ and $v\in A_p(G).$ Then:

1.  $uv\in A_p(G)$ and $\|uv\|_{A_p} \leq \|u\|_{A_q} \|u\|_{A_p},$ i.e $A_p(G)$ is a Banach module on  the Banach algebra $A_q(G),$

2.   for  every $T\in PM_p(G)$ we have
$$\big<uv, T\big>_{A_p, PM_p}=\big<u, \alpha_{p,q}(vT)\big>_{A_q, PM_q}$$(according to Theorem 7  $\alpha_{p,q}(vT) \in PM_q(G)$).

\end{theo}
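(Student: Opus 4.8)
The plan is to obtain $uv$ by duality: I will build a bounded linear functional $L$ on $PM_p(G)=A_p(G)'$ out of $u$, $v$ and the homomorphism $\alpha_{p,q}$ of Theorem 7, show that $L$ is weak-$*$ continuous, and then identify the element of $A_p(G)$ that represents it with the pointwise product $uv$. Concretely, for $T\in PM_p(G)$ I set $L(T)=\langle u,\alpha_{p,q}(vT)\rangle_{A_q,PM_q}$. This is meaningful: since $PM_p(G)$ is an $A_p(G)$-submodule of $CV_p(G)$ we have $vT\in PM_p(G)$, and $vT\in E_{p,q}$ by Theorem 7.3, so $vT\in PM_p(G)\cap E_{p,q}$ and hence $\alpha_{p,q}(vT)\in PM_q(G)$ by Theorem 7.4. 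Moreover, as $\Psi_G^q$ is an isometry of $PM_q(G)$ onto $A_q(G)'$, Theorem 7.3 gives $|L(T)|\le\|u\|_{A_q}\tribare{\alpha_{p,q}(vT)}_q\le\|u\|_{A_q}\|v\|_{A_p}\tribare{T}_p$, so $L$ is bounded with $\|L\|\le\|u\|_{A_q}\|v\|_{A_p}$.

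Next I would prove that $L$ is $\sigma(PM_p(G),A_p(G))$-continuous; by the Krein--Smulian theorem it suffices to check continuity of $L$ on the unit ball $\{T\in PM_p(G):\tribare{T}_p\le1\}$. So let $T_i\to T$ in that ball for the weak-$*$ topology. Since $S\mapsto vS$ on $PM_p(G)$ is the adjoint of multiplication by $v$ on $A_p(G)$ (see [1], Chapter 5), it is weak-$*$ continuous, so $vT_i\to vT$ weak-$*$, i.e.\ ultraweakly via the homeomorphism $\Psi_G^p$; in particular $\langle(vT_i)[\tau_pk],[\tau_{p'}l]\rangle_{L^p,L^{p'}}\to\langle(vT)[\tau_pk],[\tau_{p'}l]\rangle_{L^p,L^{p'}}$ for all $k,l\in C_{00}(G)$, since each of these is one of the defining ultraweak seminorms. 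By the identity established in the proof of Theorem 7.4, $\langle\alpha_{p,q}(vT_i)[\tau_qk],[\tau_{q'}l]\rangle_{L^q,L^{q'}}=\langle(vT_i)[\tau_pk],[\tau_{p'}l]\rangle_{L^p,L^{p'}}$, hence $\langle\overline{k}*\check l,\alpha_{p,q}(vT_i)\rangle_{A_q,PM_q}\to\langle\overline{k}*\check l,\alpha_{p,q}(vT)\rangle_{A_q,PM_q}$. As $\tribare{\alpha_{p,q}(vT_i)}_q\le\|v\|_{A_p}$ uniformly in $i$ and the linear span of $\{\overline{k}*\check l:k,l\in C_{00}(G)\}$ is dense in $A_q(G)$ (immediate from the definition of $A_q(G)$), a $3\varepsilon$-argument gives $\alpha_{p,q}(vT_i)\to\alpha_{p,q}(vT)$ weak-$*$ in $PM_q(G)$, whence $L(T_i)=\langle u,\alpha_{p,q}(vT_i)\rangle_{A_q,PM_q}\to L(T)$. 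This transfer of weak-$*$ convergence through $\alpha_{p,q}$ is the step I expect to be the main obstacle; everything else is bookkeeping with the stated isometries. It follows that there is a unique $w\in A_p(G)$ with $\langle w,T\rangle_{A_p,PM_p}=L(T)$ for all $T\in PM_p(G)$ and $\|w\|_{A_p}=\|L\|\le\|u\|_{A_q}\|v\|_{A_p}$.

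It remains to identify $w$ with $uv$ pointwise. Fix $x\in G$. Since $\widetilde{\delta_x}=\delta_{x^{-1}}$, assertion 1 of the properties of $\Psi_G^p$ gives $\langle a,\lambda_G^p(\delta_{x^{-1}})\rangle_{A_p,PM_p}=a(x)$ for every $a\in A_p(G)$; in particular $w(x)=L(\lambda_G^p(\delta_{x^{-1}}))$. The module identity $\langle a,vT\rangle_{A_p,PM_p}=\langle av,T\rangle_{A_p,PM_p}$ shows that $\Psi_G^p(v\lambda_G^p(\delta_{x^{-1}}))$ equals $v(x)$ times $\Psi_G^p(\lambda_G^p(\delta_{x^{-1}}))$, so $v\lambda_G^p(\delta_{x^{-1}})=\overline{v(x)}\,\lambda_G^p(\delta_{x^{-1}})$ because $\Psi_G^p$ is conjugate linear; applying $\alpha_{p,q}$ (linear, Theorem 7.1) and Theorem 7.2 yields $\alpha_{p,q}(v\lambda_G^p(\delta_{x^{-1}}))=\overline{v(x)}\,\lambda_G^q(\delta_{x^{-1}})$. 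Pairing with $u$ and using the conjugate linearity of $\Psi_G^q$ together with assertion 1 for $\Psi_G^q$ (and $\widetilde{\delta_x}=\delta_{x^{-1}}$ once more), I get $w(x)=L(\lambda_G^p(\delta_{x^{-1}}))=v(x)\,\langle u,\lambda_G^q(\delta_{x^{-1}})\rangle_{A_q,PM_q}=v(x)u(x)$.

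Hence $w=uv$, so $uv\in A_p(G)$ and $\|uv\|_{A_p}=\|w\|_{A_p}\le\|u\|_{A_q}\|v\|_{A_p}$, which is assertion 1; and assertion 2 is now just the defining equality $\langle uv,T\rangle_{A_p,PM_p}=\langle w,T\rangle_{A_p,PM_p}=L(T)=\langle u,\alpha_{p,q}(vT)\rangle_{A_q,PM_q}$.
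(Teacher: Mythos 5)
Your proposal is correct and follows essentially the same route as the paper: define the functional $T\mapsto\langle u,\alpha_{p,q}(vT)\rangle_{A_q,PM_q}$, establish its boundedness and its continuity on bounded nets for the weak-$*$ topology (the paper's step (II), via the same identity $\tau_q\alpha_{p,q}(vS)\tau_q[r]=\tau_p vS\tau_p[r]$ and the uniform bound from Theorem 7.3), represent it by some $w\in A_p(G)$ by the Krein--Smulian-type result ([1] Theorem 6, Section 4.2), and identify $w=uv$ by evaluating on $\lambda_G^p$ of measures. Your only deviations are cosmetic: you work directly on $PM_p(G)$ rather than on $A_p(G)'$ via $\Psi_G^p$, you use density of elementary tensors plus a $3\varepsilon$-argument where the paper fixes a representation of $u$ and passes to the limit termwise, and you test against Dirac measures $\delta_{x^{-1}}$ only, where the paper uses all of $M^1(G)$.
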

\begin{proof}

(I) Definition of a linear map $\omega$ (depending of $u$) of $A_p(G)'$ into $A_p(G)''.$

There is  $\big((k_n)_{n=1}^{\infty},(l_n)_{n=1}^{\infty}\big)\in \mathcal A_{q}(G)$ such that
$$u=\sum_{n=1}^{\infty} \overline{k_n} *\check{l_n}.$$Let $F$ be an arbitrary element of $A_p(G)'.$ We put
$$\omega(F)=\sum_{n=1}^{\infty}\overline{\big<\alpha_{p,q}(v(\Psi_{G}^{p})^{-1}(F))[\tau_q k_n],[\tau_{q'} l_n]\big>}_{L^q,L^{q'}}.$$ It is easy to verify that for $\gamma \in \C$ \hskip5pt$\omega(\gamma F)=\gamma \omega(F)$ and that $\omega(F+F')=\omega(F)+\omega(F')$ for every $F, F'\in A_p(G)'.$ Moreover
$$|\omega(F)|\leq \sum_{n=1}^{\infty} \tribare{\alpha_{p,q}(v(\Psi_{G}^{p})^{-1}(F))}_q N_{q} (k_n) N_{q'}(l_n)$$according to Theorem 7 point 3. the last expression is not larger than
$$\sum_{n=1}^{\infty}\|v\|_{A_p}\tribare{(\Phi_{G}^p)^{-1}(F)}_p N_{q} (k_n) N_{q'}(l_n)$$
$$=\|v\|_{A_p} \|F\|_{A_p'} \sum_{n=1}^{\infty}N_{q} (k_n) N_{q'}(l_n) $$this implies that $\omega(F)\in A_p(G)''.$
\vskip5pt
(II) Let $F$ be an element of $A_p(G)'$ and $(F_{i})_{i\in I}$ a net of $A_p(G)'$ such that $\lim F_i=F$ for the topology $\sigma(A_p', A_p).$ We assume the existence of $C>0$ with 
$\|F_i\|_{A_{p}'} \leq C$ for every $i.$ Then $\lim \omega(F_i)=\omega(F).$

By  [1] Theorem 6  section 4.1 page 49 $\lim (\Psi_{G}^p)^{-1}(F_i) =(\Psi_{G}^p)^{-1}(F)$ for the ultraweak topology on $\mathcal L(L^p(G)).$ We also have $\tribare{(\Psi_{G}^p)^{-1}(F_i)}_p \leq C$ for every $i\in I.$

$\text{(II)}_1$  Let $(S_i)_{i\in I}$ be a net of $PM_p(G)$ and  $S\in PM_p(G).$ Suppose that $\lim S_i=S$ for the ultraweak topology on $\mathcal L(L^p(G)).$ Then for every $v\in A_p(G)$ we have $\lim vS_i=vS$ for the ultraweak topology on $\mathcal L(L^p(G)).$

For every $a\in A_p(G)$ we have $\lim <a, S_i>_{A_p, PM_p}=<a, S>_{A_p, PM_p}.$ In particular $\lim <av, S_i>_{A_p, PM_p}=<av, S>_{A_p, PM_p}$ hence $\lim <a,v S_i>_{A_p, PM_p}$
$=<a, vS>_{A_p, PM_p}$ and consequently $\lim vS_i=vS$ for the ultraweak topology on $\mathcal L(L^p(G)).$

$\text{(II)}_2$ End of the proof of (II).

By $\text{(II)}_1$ $\lim v(\Psi_{G}^p)^{-1}(F_i) =v(\Psi_{G}^p)^{-1}(F)$ for the ultraweak topology on $\mathcal L(L^p(G)).$ Moreover for every $i\in I$
$$\tribare{v(\Psi_{G}^p)^{-1}(F_i)}_p\leq \|v\|_{A_p} \tribare{(\Psi_{G}^{p})^{-1}(F_i)}_p\leq \|v\|_{A_p} C.$$
Let $r, s\in C_{00}(G).$ We have$$\lim\big<v(\Psi_{G}^p)^{-1}(F_i)[\tau_p r],[\tau_{p'}s]\big>_{L^p, L^{p'}}=\big<v(\Psi_{G}^p)^{-1}(F)[\tau_p r],[\tau_{p'}s]\big>_{L^p, L^{p'}}.$$Taking in account that
$[r]\in L^p(G) \cap L^q(G)$ and Theorem 3 we have
$$\tau_q \alpha_{p,q} \big(v(\Psi_{G}^p)^{-1}(F)\big)\tau_q[r]=\tau_pv(\Psi_{G}^p)^{-1}(F)\tau_p[r]$$and$$\tau_q \alpha_{p,q} \big(v(\Psi_{G}^p)^{-1}(F_i)\big)\tau_q[r]=\tau_pv(\Psi_{G}^p)^{-1}(F_i)\tau_p[r].$$Consequently
$$\big<v(\Psi_{G}^p)^{-1}(F)[\tau_p r],[\tau_{p'}s]\big>_{L^p, L^{p'}}
=\big<\alpha_{p,q}\big(v(\Psi_{G}^p)^{-1}(F)\big)[\tau_q r],[\tau_{q'}s]\big>_{L^q, L^{q'}}$$and
$$\big<v(\Psi_{G}^p)^{-1}(F_i)[\tau_p r],[\tau_{p'}s]\big>_{L^p, L^{p'}}
=\big<\alpha_{p,q}\big(v(\Psi_{G}^p)^{-1}(F_i)\big)[\tau_q r],[\tau_{q'}s]\big>_{L^q, L^{q'}}.$$But for every $i\in I$
$$\tribare{\alpha_{p,q}\big(v(\Psi_{G}^p)^{-1}(F_i)\big)}_q \leq \|v\|_{A_p} \tribare{(\Psi_{G}^{p})^{-1}(F_i)}_p \leq \|v\|_{A_p} C.$$It follows that
$$\lim \alpha_{p,q}\big(v(\Psi_{G}^p)^{-1}(F_i)\big)=\alpha_{p,q}\big(v(\Psi_{G}^p)^{-1}(F)\big)$$ultraweakly on $\mathcal L(L^q(G)).$ In particular
$$\lim\sum_{n=1}^{\infty} \big< \alpha_{p,q}\big(v(\Psi_{G}^p)^{-1}(F_i)\big)[\tau_q k_n], [\tau_{q'} l_n]\big>_{L^q, L^{q'}}$$
$$=\sum_{n=1}^{\infty} \big< \alpha_{p,q}\big(v(\Psi_{G}^p)^{-1}(F)\big)[\tau_q k_n], [\tau_{q'} l_n]\big>_{L^q, L^{q'}}$$ i.e\hskip5pt   $\lim \omega(F_i)= \omega(F).$ 

\vskip5pt
 (III) There is $w\in A_p(G)$ such that  $\omega(\Psi_{G}^{p}(T))=<w, T>_{A_p, PM_p}$ for every $T\in PM_p(G).$

 According to $[1] $ Theorem 6 section 4.2 page 54 there is $w\in A_p(G)$ with $\omega(F)= F(w)$ for every $F\in A_p(G)'.$ For every  $T\in PM_p(G)$ we get
 $\omega(\Psi_{G}^{p}(T))=\Psi_{G}^{p}(T)(w)=<w, T>_{A_p, PM_p}.$
 \vskip5pt
 (IV) For every $T\in PM_p(G)$ we have$$<w, T>_{A_p, PM_p}=<u, \alpha_{p,q}(vT)>_{A_q, PM_q}.$$

 We have$$<w, T>_{A_p, PM_p}=\omega(\Psi_{G}^{p}(T))$$
 $$=\sum_{n=1}^{\infty}\overline{\big<\alpha_{p,q}\big(v (\Psi_{G}^{p})^{-1}(\Psi_{G}^{p}(T))\big)[\tau_q k_n], [\tau_{q'}l_n]\big>}_{L^q, L^{q'}}$$
 $$=\sum_{n=1}^{\infty} \overline{\big<\alpha_{p,q}(vT)[\tau_{q} k_n], [\tau_{q'}l_n]\big>}_{L^q, L^{q'}}=<u, \alpha_{p,q}(vT)>_{A_q, PM_q}.$$
 \vskip5pt
(V) End of the proof of Theorem 8.

For every $\mu\in M^1(G)$ we have$$<w, \lambda_{G}^p(\mu)>_{A_p, PM_p}=<u, \alpha_{p,q}(v\lambda_{G}^p(\mu))>_{A_q, PM_q}$$but $v\lambda_{G}^p(\mu)=\lambda_{G}^{p}(\tilde v \mu)$ and $\alpha_{p,q}(\lambda_{G}^{p}(\tilde v \mu))= \lambda_{G}^{q}(\tilde v \mu).$ From $<w,\lambda_{G}^p(\mu)>_{A_p, PM_p}=\Psi_{G}^p(\lambda_{G}^p(\mu))(w)=\tilde \mu(w)$ and
$$\big<u, \lambda_{G}^{q}(\tilde v \mu)\big>_{A_q, PM_q}=\tilde \mu(uv),$$it follows  $\tilde \mu(uv)=\tilde \mu(w).$ This implies $w=uv$ and therefore $uv\in A_p(G).$ We get moreover
$$\big<uv, T\big>_{A_p, PM_p}=\big<u, \alpha_{p,q}(vT)\big>_{A_q, PM_q}$$for every $T\in PM_p(G).$ We obtain the following estimate
$$\big|\big<uv, T\big>_{A_p, PM_p}\big|
=\big|\big<u, \alpha_{p,q}(vT)\big>_{A_q, PM_q}\big|\leq \|u\|_{A_q} \tribare{\alpha_{p,q}(vT)}_q$$
$$\leq \|u\|_{A_q} \|v\|_{A_p} \tribare{T}_p.$$We finally conclude that $\|uv\|_{A_p} \leq \|u\|_{A_q} \|v\|_{A_p}.$
\end{proof}
\vskip5pt
\begin{rem} For a related result see Theorem B of [3]. Our approach is based on properties of $A_p(G)$ contained in $[1]$.
\end{rem} 
 \section{Amenable groups}
 We apply  the results of paragraph 2 to the case of amenable groups.
\begin{theo} Let $G$ be an amenable locally compact group, $p,q\in \R$ with $p>1, p\not=2$ and $q$ between $2$ and $p$. Then for every $T\in CV_p(G)$ we have $T\in E_{p,q}$ and $\tribare{\alpha_{p,q}(T)}_q \leq \tribare{T}_p.$
\end{theo}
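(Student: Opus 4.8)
The plan is to reduce the amenable case to the general results of Section 2, the key point being the characterization of $CV_p(G)$ for amenable $G$. Recall from \cite{1} that when $G$ is amenable, $PM_p(G) = CV_p(G)$; that is, every $p$-convolution operator is a $p$-pseudomeasure, hence lies in the ultraweak closure of $\lambda_G^p(M^1(G))$. This is precisely the extra input that is unavailable for general $G$ and which makes the amenable statement stronger than Proposition 5 (which only handles the operators $\lambda_G^p(\mu)$ themselves).

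First I would fix $T \in CV_p(G) = PM_p(G)$ and choose, via the ultraweak density, a net $(\mu_i)_{i\in I}$ in $M^1(G)$ with $\tribare{\lambda_G^p(\mu_i)}_p$ bounded (using that $T$ lies in the ultraweakly closed convex hull and invoking a Hahn--Banach / bipolar argument to keep the norms controlled by $\tribare{T}_p$, up to an arbitrarily small $\varepsilon$) such that $\lambda_G^p(\mu_i) \to T$ ultraweakly. By Proposition 5 each $\lambda_G^p(\mu_i)$ lies in $E_{p,q}$, with $\tau_p \lambda_G^p(\mu_i)\tau_p\varphi = \rho_G^q(\mu_i)\varphi$ and $\tribare{\cdot}_q$-bound $\|\mu_i\|$ — but I must be careful, because the bound I want is in terms of $\tribare{\lambda_G^p(\mu_i)}_p$, not $\|\mu_i\|$, and here is where the deep inequality $\tribare{\lambda_G^2(\mu)}_2 \le \tribare{\lambda_G^p(\mu)}_p$ from \cite{1} (and more generally its $q$-version, which is in fact \emph{this paper's} Corollary 10 — so I should instead argue directly) comes in. The cleanest route: apply $\alpha_{p,q}$ termwise, so $\alpha_{p,q}(\lambda_G^p(\mu_i)) = \lambda_G^q(\mu_i)$ by Theorem 7(2), and then pass to the ultraweak limit in $\mathcal L(L^q(G))$, showing the limit is the operator $\alpha_{p,q}(T)$ in the sense of Definition 2, with norm bounded by $\liminf \tribare{\lambda_G^q(\mu_i)}_q$.

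The main obstacle is establishing $T \in E_{p,q}$ in the first place, i.e. that $\tau_p T \tau_p \varphi \in L^q(G)$ with a uniform bound, since $E_{p,q}$-membership is not obviously preserved under ultraweak limits (the condition involves the operator acting on individual vectors $\varphi \in L^p \cap L^q$). To handle this I would test against $[r], [s] \in C_{00}(G)$: from $\lambda_G^p(\mu_i) \to T$ ultraweakly we get $\langle T[\tau_p r],[\tau_{p'}s]\rangle = \lim \langle \lambda_G^p(\mu_i)[\tau_p r],[\tau_{p'}s]\rangle = \lim \langle \lambda_G^q(\mu_i)[\tau_q r],[\tau_{q'}s]\rangle$, and the last net is bounded (uniformly in $r,s$ of unit $L^q$, $L^{q'}$ norm) provided I have $\tribare{\lambda_G^q(\mu_i)}_q \le \tribare{\lambda_G^p(\mu_i)}_p$. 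For that last inequality on $M^1(G)$ I would either cite the amenable-group result of \cite{1} directly for the pair $(q,p)$ when $q$ lies between $2$ and $p$ — noting $\tribare{\lambda_G^q(\mu)}_q \le \tribare{\lambda_G^p(\mu)}_p$ follows from the $L^2$-to-$L^p$ version together with Riesz--Thorin interpolation exactly as in Proposition 2 — or, if one prefers a self-contained argument, combine $\tribare{\lambda_G^2(\mu)}_2 \le \tribare{\lambda_G^p(\mu)}_p$ with the trivial $\tribare{\lambda_G^q(\mu)}_q \le \tribare{\lambda_G^2(\mu)}_2^{1-\theta}\tribare{\lambda_G^p(\mu)}_p^{\theta}$ coming from complex interpolation of the convolution operator between $L^2$ and $L^p$. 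Once the uniform bound on $C_{00}$ pairs is in hand, density of $[C_{00}(G)]$ in $L^q(G)$ and in $L^{q'}(G)$ upgrades it to $\tribare{\tau_p T\tau_p\varphi}_q \le \tribare{T}_p\,\|\varphi\|_q$ for all $\varphi \in L^p \cap L^q$, giving $T \in E_{p,q}$; and the same computation identifies $\alpha_{p,q}(T)$ as the ultraweak limit of $\lambda_G^q(\mu_i)$, whence $\tribare{\alpha_{p,q}(T)}_q \le \liminf \tribare{\lambda_G^q(\mu_i)}_q \le \tribare{T}_p$.
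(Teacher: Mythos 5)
Your route is genuinely different from the paper's, and it contains a real gap. The paper never passes through measures or through $CV_p(G)=PM_p(G)$: it fixes $\varphi,\psi\in C_{00}(G)$ and invokes amenability only through [1], Lemma 1 of Section 5.4, which supplies $k,l\in C_{00}(G)$, $k,l\ge 0$, with $N_p(k)=N_{p'}(l)=\int_G k(t)l(t)\,dt=1$ and $\big|\big\langle\big((k*\check l)T\big)[\tau_p\varphi],[\tau_{p'}\psi]\big\rangle-\big\langle T[\tau_p\varphi],[\tau_{p'}\psi]\big\rangle\big|<\varepsilon$. Since $\|k*\check l\|_{A_p}\le N_p(k)N_{p'}(l)=1$, Theorem 3 applied to $u=k*\check l$ gives $\|\tau_p\big((k*\check l)T\big)\tau_p[\varphi]\|_q\le\tribare{T}_p\,\|[\varphi]\|_q$, and letting $\varepsilon\to 0$ and taking the supremum over $\psi$ with $N_{q'}(\psi)\le 1$ yields $\|\tau_pT\tau_p[\varphi]\|_q\le\tribare{T}_p\,\|[\varphi]\|_q$. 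The weight of amenability is thus carried entirely by the $A_p$-module structure together with the general-group Theorem 3.

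The gap in your plan is its very first step: you need a net $(\mu_i)$ in $M^1(G)$ with $\lambda_G^p(\mu_i)\to T$ ultraweakly \emph{and} $\limsup_i\tribare{\lambda_G^p(\mu_i)}_p\le(1+\varepsilon)\tribare{T}_p$. The equality $CV_p(G)=PM_p(G)$ only produces some approximating net, with no control whatever on its norms (the operator norm is merely ultraweakly lower semicontinuous, which is the useless direction), and the ``Hahn--Banach / bipolar argument'' you invoke does not close. Indeed, to separate $T$ from the ultraweak closure of $\{\lambda_G^p(\mu):\tribare{\lambda_G^p(\mu)}_p\le 1+\varepsilon\}$ you would obtain a $u\in A_p(G)$ whose restriction to $\lambda_G^p(M^1(G))$ has $CV_p$-norm at most $(1+\varepsilon)^{-1}$ while $|\langle u,T\rangle|>1$; to contradict this you must know that $\sup\{|\langle u,\lambda_G^p(\mu)\rangle|:\tribare{\lambda_G^p(\mu)}_p\le1\}$ already equals $\|u\|_{A_p'}$ computed against all of $PM_p(G)$ --- which is exactly the Kaplansky-type density statement you are trying to establish. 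That fact is true for amenable groups, but its proof in [1] proceeds by approximating $T$ by $(k*\check l)T$ with $\|k*\check l\|_{A_p}\le 1$, i.e.\ precisely the lemma the paper's proof uses directly; so your detour through measures either leaves this step unjustified or, once repaired, collapses onto the paper's argument. The remainder of your plan is sound and non-circular: the identity $\langle\lambda_G^p(\mu)[\tau_pr],[\tau_{p'}s]\rangle=\langle\lambda_G^q(\mu)[\tau_qr],[\tau_{q'}s]\rangle$ (both sides equal $\int_G (r*\mu)(x)s(x)\,dx$), the inequality $\tribare{\lambda_G^q(\mu)}_q\le\tribare{\lambda_G^p(\mu)}_p$ obtained from the deep inequality $\tribare{\lambda_G^2(\mu)}_2\le\tribare{\lambda_G^p(\mu)}_p$ and Riesz--Thorin applied to right convolution by $\mu$, and the upgrade from a uniform bound on $C_{00}$ pairs to $T\in E_{p,q}$ are all correct.
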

\begin{proof}
(I) For every $\varphi\in L^p(G)\cap L^q(G)$ we have $\tau_p T \tau_p  \varphi\in L^q(G)$ and 
$\tribare{\tau_p T \tau_p  \varphi}_q \leq  \tribare{T}_p \|\varphi\|_q$.

It suffices to verify (I) for $[\varphi]$ with $\varphi\in C_{00}(G).$ Let $\psi\in C_{00}(G)$ with $N_{q'}(\psi) \leq 1$ and $\varepsilon>0.$ By $[1]$ Lemma 1 section 5.4 page 80 there is $k,l \in C_{00}(G)$ with $k\geq 0,\hskip3pt l\geq 0, \hskip3pt N_{p}(k)=N_{p'}(l)=\displaystyle \int_{G}k(t) l(t)dt=1$ and such that
$$\big|\big< \big((k*\check{l})T\big)[\tau_p \varphi],[\tau_{p'}\psi]\big>_{L^p, L^{p'}}
-\big<T[\tau_p \varphi],[\tau_{p'}\psi]\big>_{L^p, L^{p'}}\big|<\varepsilon.$$But
$$\big< \big((k*\check{l})T\big)[\tau_p \varphi],[\tau_{p'}\psi]\big>_{L^p, L^{p'}}
=\big<\tau_{p} \big((k*\check{l})T\big)\tau_p[ \varphi],[\psi]\big>_{L^p, L^{p'}}$$and according 
to Theorem 3 $\tau_{p} \big((k*\check{l})T\big)\tau_p[ \varphi]$ belongs to $L^q(G)$ this implies
$$\big<\tau_{p} \big((k*\check{l})T\big)\tau_p[ \varphi],[\psi]\big>_{L^p, L^{p'}}
=\big<\tau_{p} \big((k*\check{l})T\big)\tau_p[ \varphi],[\psi]\big>_{L^q, L^{q'}}$$and therefore
$$\big|\big< \big((k*\check{l})T\big)[\tau_p \varphi],[\tau_{p'}\psi]\big>_{L^p, L^{p'}}\big|
\leq \|\tau_{p} \big((k*\check{l})T\big)\tau_p[ \varphi]\|_q \|[\psi]\|_{q'}.$$Using again Theorem 3 we have
$$\|\tau_{p} \big((k*\check{l})T\big)\tau_p[ \varphi]\|_q 
\leq \|k*\check{l}\|_{A_p} \tribare{T}_p \|[\varphi]\|_q \leq  \tribare{T}_p \|[\varphi]\|_q,$$and obtain
$$\big|\big< \big((k*\check{l})T\big)[\tau_p \varphi],[\tau_{p'}\psi]\big>_{L^p, L^{p'}}\big|
\leq \tribare{T}_p \|[\varphi]\|_q \|\psi\|_{q'}\leq \tribare{T}_p \|[\varphi]\|_q$$and therefore
$$\big|\big<T[\tau_p \varphi].[\tau_{p'}\psi]\big>_{L^p, L^{p'}}\big|
<\varepsilon+ \tribare{T}_p \|[\varphi]\|_q.$$Thus
$$\big|\big<T[\tau_p \varphi].[\tau_{p'}\psi]\big>_{L^p, L^{p'}}
\big|\leq  \tribare{T}_p \|[\varphi]\|_q,$$$$ \big|\big<\tau_pT \tau_p[ \varphi].[\psi]\big>_{L^p, L^{p'}}
\big|\leq  \tribare{T}_p \|[\varphi]\|_q$$and finally $\tau_p T\tau_p [\varphi] \in L^q(G)$ with 
$\|\tau_p T\tau_p [\varphi]\|_q \leq \tribare{T}_p \|[\varphi]\|_q.$
\vskip5pt

(II) $\tribare{\alpha_{p,q}(T)}_q \leq \tribare{T}_p$.

There is a unique $S\in \mathcal L(L^q(G))$ such that $S\varphi=\tau_p T\tau_p \varphi$ for every $\varphi\in L^p(G) \cap L^q(G).$ By (I) $\tribare{S}_q \leq \tribare{T}_p,$ consequently 
$\tribare{\alpha_{p,q}(T)}_q=\tribare{\tau_q S \tau_q}_q=\tribare{S}_q\leq \tribare{T}_p.$
\end{proof}
\setcounter{cor}{9}
\begin{cor} Let $G$ be an amenable locally compact group, $p,q\in \R$ with $p>1, p\not=2$ and $q$ between $2$ and $p$. Then:

1.  $\alpha_{p,q}$ is a contractive Banach algebra monomorphism of $CV_p(G)$ into $CV_q(G),$

2. for every $\mu\in M^1(G)$ we have $\tribare{\lambda_{G}^{q}(\mu)}_q \leq \tribare{\lambda_{G}^{p}(\mu)}_p.$
\end{cor}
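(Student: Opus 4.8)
The plan is to read off the corollary as an essentially formal consequence of Theorem 9 and Theorem 7, with no new analytic input: all the genuine work (the approximation of an arbitrary $T\in CV_p(G)$ by operators $(k*\check l)T$ with $\|k*\check l\|_{A_p}\le 1$, using the amenability of $G$ through Lemma 1, section 5.4 of [1]) has already been carried out in Theorem 9.

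First I would settle assertion 1. Note that $E_{p,q}\subseteq CV_p(G)$ holds trivially by Definition 1, while Theorem 9 gives the reverse inclusion: for an amenable $G$ every $T\in CV_p(G)$ lies in $E_{p,q}$. Hence $E_{p,q}=CV_p(G)$, so $\alpha_{p,q}$ is defined on all of $CV_p(G)$. By Theorem 7 point 1, $\alpha_{p,q}$ is an algebra monomorphism of $E_{p,q}=CV_p(G)$ into $CV_q(G)$; and by the norm estimate $\tribare{\alpha_{p,q}(T)}_q\le\tribare{T}_p$ of Theorem 9, valid now for every $T\in CV_p(G)$, this monomorphism is contractive. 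This is exactly assertion 1.

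For assertion 2, fix $\mu\in M^1(G)$. Since $\lambda_G^p(\mu)\in CV_p(G)$, Theorem 7 point 2 gives $\alpha_{p,q}(\lambda_G^p(\mu))=\lambda_G^q(\mu)$, and applying the contractivity from assertion 1 to $T=\lambda_G^p(\mu)$ yields $\tribare{\lambda_G^q(\mu)}_q=\tribare{\alpha_{p,q}(\lambda_G^p(\mu))}_q\le\tribare{\lambda_G^p(\mu)}_p$, which is the claimed inequality. (Note $q$ between $2$ and $p$ is the same range as $q$ between $p$ and $2$, so Theorems 7 and 9 apply as stated.)

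I do not expect any real obstacle here: the only subtlety is verifying that Theorem 9 indeed upgrades the hypothesis set of Theorem 7 from the abstract subalgebra $E_{p,q}$ to the whole of $CV_p(G)$, and once that observation is made the corollary is immediate. If one wanted to be thorough, the one point worth double-checking is that the equality $E_{p,q}=CV_p(G)$ is all that is needed for Theorem 7's conclusions to carry over verbatim — which it is, since Theorem 7 is phrased entirely in terms of $E_{p,q}$.
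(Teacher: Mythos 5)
Your proposal is correct and follows the same route as the paper: the paper likewise treats assertion 1 as immediate from Theorem 9 (which gives $E_{p,q}=CV_p(G)$ and the contraction estimate) together with Theorem 7, and proves assertion 2 by combining $\tribare{\alpha_{p,q}(\lambda_{G}^{p}(\mu))}_q \leq \tribare{\lambda_{G}^{p}(\mu)}_p$ with the identity $\alpha_{p,q}(\lambda_{G}^{p}(\mu))=\lambda_{G}^{q}(\mu)$. Your write-up is in fact slightly more explicit than the paper's about why assertion 1 holds.
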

\begin{proof} It suffices to verify 2. . By  Theorem 9 $$\tribare{\alpha_{p,q}(\lambda_{G}^{p}(\mu))}_q \leq \tribare{\lambda_{G}^{p}(\mu)}_p$$but by Theorem 7  $\alpha_{p,q}(\lambda_{G}^{p}(\mu))=\lambda_{G}^{q}(\mu).$
\end{proof}
\begin{rem}
See [4] Corollary p. 512. For another approach for the unimodular case see [2].
\end{rem} 
\vskip5pt
We obtain  other properties of $\alpha_{p,q}$ in the next theorem.
\setcounter{theo}{10}
\begin{theo} Let $G$ be an amenable locally compact group, $p,q\in \R$ with $p>1, p\not=2$ and $q$ between $2$ and $p$. Then:

1. for every $u\in A_q(G)$ we have $u\in A_p(G)$ and $\|u\|_{A_p}\leq \|u\|_{A_q},$

2.  for every $u\in A_q(G)$ and for every $T\in PM_p(G)$ we have
$$\big<u,T\big>_{A_p, PM_p}= \big< u, \alpha_{p,q}(T)\big>_{A_q, PM_q},$$

3. for every $u\in A_q(G)$ and for every $T\in CV_p(G)$ we have
$$\alpha_{p,q}(uT)=u \hskip1pt\alpha_{p,q}(T).$$
\end{theo}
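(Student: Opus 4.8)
The plan is to deduce all three assertions from the results already established for general groups (Theorems 7 and 8) together with the amenability input of Theorem 9, which guarantees that \emph{every} $T\in CV_p(G)$ lies in $E_{p,q}$, not merely those of the form $uT$. First I would prove assertion 1. Fix $u\in A_q(G)$. Since $A_q(G)$ is the constant function $1$ times $u$ in spirit, the clean way is to invoke Theorem 8: one needs a $v\in A_p(G)$ with $uv=u$, but that is circular. Instead I would argue directly. By Theorem 9, for every $\mu\in M^1(G)$ we have $\lambda_G^p(\mu)\in E_{p,q}$; combining Theorem 7 point 2 ($\alpha_{p,q}(\lambda_G^p(\mu))=\lambda_G^q(\mu)$) with Corollary 10 point 1 ($\alpha_{p,q}$ a contractive monomorphism $CV_p\to CV_q$), the functional $T\mapsto \langle u,\alpha_{p,q}(T)\rangle_{A_q,PM_q}$ is a well-defined bounded linear form on $CV_p(G)$ of norm at most $\|u\|_{A_q}$; restricting to $PM_p(G)$ (a closed subspace on which $\alpha_{p,q}$ maps into $PM_q(G)$ by Theorem 7 point 4, applicable since $E_{p,q}=CV_p(G)$ here) it defines an element of $PM_p(G)'=A_p(G)$ of norm $\le\|u\|_{A_q}$. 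Call this element $w$. I would then show $w=u$ by testing against $\lambda_G^p(\mu)$ exactly as in step (V) of the proof of Theorem 8: $\langle w,\lambda_G^p(\mu)\rangle_{A_p,PM_p}=\langle u,\lambda_G^q(\mu)\rangle_{A_q,PM_q}=\tilde\mu(u)$, while $\langle u,\lambda_G^p(\mu)\rangle$ (interpreting $u$ as already a $p$-pseudomeasure pairing, once we know $u\in C_0$) also equals $\tilde\mu(u)$ — more carefully, since $M^1(G)$ is weak-$*$ dense enough and the two bounded functionals on $PM_p(G)$ agree on $\lambda_G^p(M^1(G))$ whose span is ultraweakly dense, they agree everywhere, forcing $w=u\in A_p(G)$ with $\|u\|_{A_p}\le\|u\|_{A_q}$.

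Assertion 2 is then essentially the identity already proved along the way: for $u\in A_q(G)$ and $T\in PM_p(G)$, unwinding the definition of $w=u$ gives $\langle u,T\rangle_{A_p,PM_p}=\langle u,\alpha_{p,q}(T)\rangle_{A_q,PM_q}$. I would phrase this as: the element $w\in A_p(G)$ constructed in part 1 satisfies $\Psi_G^p(T)(w)=\langle u,\alpha_{p,q}(T)\rangle_{A_q,PM_q}$ for all $T\in PM_p(G)$ by construction, and $w=u$, which is exactly assertion 2.

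For assertion 3, I would fix $u\in A_q(G)$ and $T\in CV_p(G)$ and compare the two operators $\alpha_{p,q}(uT)$ and $u\,\alpha_{p,q}(T)$ in $CV_q(G)$; note $u\in A_q(G)$ acts on $CV_q(G)$ by the module structure recalled in the introduction, and $\alpha_{p,q}(T)\in CV_q(G)$ by Corollary 10, so the right-hand side makes sense. Both are $q$-convolution operators, so it suffices to check equality of the pairings $\langle\,\cdot\,[\tau_q r],[\tau_{q'}s]\rangle_{L^q,L^{q'}}$ for $r,s\in C_{00}(G)$, or equivalently (by density and the isometry $\Psi_G^q$) to check $\langle a,\alpha_{p,q}(uT)\rangle_{A_q,PM_q}=\langle a,u\,\alpha_{p,q}(T)\rangle_{A_q,PM_q}=\langle au,\alpha_{p,q}(T)\rangle_{A_q,PM_q}$ for all $a\in A_q(G)$. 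By assertion 2 applied with $uT\in CV_p(G)$ in place of $T$ — more precisely using Theorem 8 point 2 which gives $\langle au, T\rangle_{A_p,PM_p}=\langle a,\alpha_{p,q}(uT)\rangle_{A_q,PM_q}$ for $T\in PM_p(G)$ — together with assertion 2 applied to $a u\in A_q(G)$ giving $\langle au,T\rangle_{A_p,PM_p}=\langle au,\alpha_{p,q}(T)\rangle_{A_q,PM_q}$, the two sides coincide for all $T\in PM_p(G)$, and then for all $T\in CV_p(G)$ by an ultraweak density argument (the span of $\lambda_G^p(M^1(G))$ is ultraweakly dense in $CV_p(G)$ for amenable $G$, and both sides of assertion 3 depend ultraweakly continuously on $T$ on bounded sets, as in step (II) of Theorem 8's proof).

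The main obstacle I anticipate is the bookkeeping in part 1: one must be careful that $u\in A_q(G)$, a priori only a function and an element of $PM_q(G)'$, is being legitimately paired against $\alpha_{p,q}(T)\in PM_q(G)$, and that the resulting functional on $PM_p(G)$ is genuinely $\sigma(PM_p,A_p)$-continuous so as to land in $A_p(G)$ rather than merely $PM_p(G)^*$. This is where the uniform bound $\tribare{\alpha_{p,q}(T)}_q\le\tribare{T}_p$ from Corollary 10, the ultraweak-to-ultraweak continuity of $\alpha_{p,q}$ on bounded sets, and the characterization of $A_p(G)$ as the predual all have to be combined exactly as in parts (II)--(III) of the proof of Theorem 8; I would model the argument line-by-line on that proof, with $v$ replaced by the constant $1$ and $(\Psi_G^p)^{-1}(F)$ replaced directly by a running $T\in PM_p(G)$.
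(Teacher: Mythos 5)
Your plan is correct and follows essentially the same route as the paper: the paper likewise builds the functional $F\mapsto\langle u,\alpha_{p,q}((\Psi_G^p)^{-1}(F))\rangle_{A_q,PM_q}$, bounds it by $\|u\|_{A_q}$ via Theorem 9, verifies $\sigma(A_p',A_p)$-continuity on bounded nets exactly as in steps (II)--(III) of Theorem 8 (with $v$ effectively replaced by $1$), identifies the resulting element of $A_p(G)$ with $u$ by testing against $\lambda_G^p(\mu)$, and obtains assertion 3 by pairing against arbitrary $v\in A_q(G)$ using assertion 2 and Theorem 8. Your self-correction about $PM_p(G)'$ being $A_p(G)''$ rather than $A_p(G)$, and the need for the predual/weak-$*$ continuity argument, is precisely the point the paper's steps (I)--(III) address.
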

\begin{proof} Let $u$ be a function of $A_q(G).$
\vskip5pt

(I) Definition of a linear map $\omega$ (depending  of $u$) of $A_p(G)'$ into $A_p(G)''.$

There is $\big((k_n)_{n=1}^{\infty},(l_n)_{n=1}^{\infty}\big)\in \mathcal A_{q}(G)$ such that
$$u=\sum_{n=1}^{\infty} \overline{k_n}*\check{l_n}.$$For every $F\in A_p(G)'$ we put
$$\omega(F)=\sum_{n=1}^{\infty} \overline{\big<\alpha_{p,q}\big((\Psi_{G}^{p})^{-1}(F)\big)[\tau_{q} k_n], [\tau_{q'} l_n]\big>}_{L^q, L^{q'}}.$$For every $\gamma \in \C$ and $F, F'\in A_p(G)'$ we have
$\omega(\gamma F)=\gamma\hskip1pt \omega(F)$ and $\omega(F+F')=\omega(F)+\omega(F').$ We now show that $\omega(F)\in A_p(G)''.$ Observe at first that
$$|\omega(F)|
\leq \sum_{n=1}^{\infty} \big|\big<\alpha_{p,q}\big((\Psi_{G}^{p})^{-1}(F)\big)[\tau_q k_n], [\tau_{q'} l_n]\big>_{L^q, L^{q'}}\big|,$$taking in account Theorem 9 we have for every $n\geq 1$
$$\big|\big<\alpha_{p,q}\big((\Psi_{G}^{p})^{-1}(F)\big)[\tau_q k_n], [\tau_{q'} l_n]\big>_{L^q, L^{q'}}\big|$$
$$\leq \tribare{(\Psi_{G}^{p})^{-1}(F)}_p N_q(k_n) N_{q'}(l_n)=\|F\|_{A_p'} N_q(k_n) N_{q'}(l_n)$$hence
$$|\omega(F)|\leq \|F\|_{A_p'} \sum_{n=1}^{\infty} N_{q}(k_n) N_{q'}(l_n).$$This implies that  $\omega(F)\in A_p(G)''.$
\vskip5pt
(II) Let $F$ be an element of $A_p(G)'$ and $(F_i)_{i\in I}$ a net of $A_p(G)'$ such that $\lim F_i=F$
for the topology $\sigma(A_p', A_p'').$ We assume the existence of $C>0$ such that $\|F_i\|_{A_p'} \leq C$ for every $i\in I.$ Then $\lim \omega(F_i)=\omega(F).$

We have $\lim (\Psi_{G}^{p})^{-1}(F_i)=(\Psi_{G}^{p})^{-1}(F)$ ultraweakly in $\mathcal L(L^p(G)).$ We also have $\tribare{(\Psi_{G}^{p})^{-1}(F_i)}_p \leq C$ for every $i\in I.$

Let  $r, s\in C_{00}(G).$ We have
$$\lim\big <(\Psi_{G}^{p})^{-1}(F_i)[\tau_p r], [\tau_{p'} s]\big>_{L^p, L^{p'}}=\big <(\Psi_{G}^{p})^{-1}(F)[\tau_p r], [\tau_{p'} s]\big>_{L^p, L^{p'}}.$$Taking in account that $[r]\in L^p(G)\cap L^q(G)$
and that $(\Psi_{G}^{p})^{-1}(F_i), (\Psi_{G}^{p})^{-1}(F_i) \in E_{p,q} $ we get
 $$\tau_q  \alpha_{p,q}\big((\Psi_{G}^{p})^{-1}(F_i)\big)\tau_q [r]=\tau_p(\Psi_{G}^{p})^{-1}(F_i) \tau_p [r]$$and$$\tau_q  \alpha_{p,q}\big((\Psi_{G}^{p})^{-1}(F)\big)\tau_q [r]=\tau_p(\Psi_{G}^{p})^{-1}(F) \tau_p [r].$$This implies
 $$\big <(\Psi_{G}^{p})^{-1}(F)[\tau_p r], [\tau_{p'} s]\big>_{L^p, L^{p'}}
 =\big<\tau_q\alpha_{p,q}\big((\Psi_{G}^{p})^{-1}(F)\big)\tau_q[r], [s]\big>_{L^q, L^{q'}}$$and for every $i\in I$
 $$\big <(\Psi_{G}^{p})^{-1}(F_i)[\tau_p r], [\tau_{p'} s]\big>_{L^p, L^{p'}}
 =\big<\tau_q\alpha_{p,q}\big((\Psi_{G}^{p})^{-1}(F_i)\big)\tau_q[r], [s]\big>_{L^q, L^{q'}}.$$We have therefore
 $$\lim\big<\alpha_{p,q}\big((\Psi_{G}^{p})^{-1}(F_i)\big)[\tau_q r], [\tau_{q'} s]\big>_{L^q, L^{q'}}$$
 $$= \big<\alpha_{p,q}\big((\Psi_{G}^{p})^{-1}(F)\big)[\tau_q r], [\tau_{q'} s]\big>_{L^q, L^{q'}}.$$But for every $i\in I$
 $$\tribare{\alpha_{p,q}\big((\Psi_{G}^{p})^{-1}(F_i)\big)}_q \leq \tribare{(\Psi_{G}^{p})^{-1}(F_i)}_p
 \leq C,$$we obtain that
 $$\lim \alpha_{p,q}\big((\Psi_{G}^{p})^{-1}(F_i)\big)=\alpha_{p,q}\big((\Psi_{G}^{p})^{-1}(F_i)\big)$$ultraweakly on $\mathcal L(L^q(G)).$ This finally implies that
 $$\lim\sum_{n=1}^{\infty}\big<\alpha_{p,q}\big((\Psi_{G}^{p})^{-1}(F_i)\big)[\tau_q k_n], [\tau_{q'} l_n]\big>_{L^q, L^{q'}}$$
 $$=\sum_{n=1}^{\infty}\big<\alpha_{p,q}\big((\Psi_{G}^{p})^{-1}(F)\big)[\tau_q k_n], [\tau_{q'}  l_n]\big>_{L^q, L^{q'}}$$i.e. $\lim \omega(F_i)=\omega(F).$
 \vskip5pt
 (III) End of the proof of Theorem 11.

 As in the proof of Theorem 8 (see step (III)) there is $v\in A_p(G)$ such that $\omega(F)= F(v)$ for every $F\in A_p(G)'.$ For every $T\in CV_p(G)$ we have
 $$\omega(\Psi_{G}^p(T))=\Psi_{G}^p(T)(v)=\big<v, T\big>_{A_p, PM_p}$$
$$=\sum_{n=1}^{\infty} \overline{\big<\alpha_{p,q}\big((\Psi_{G}^p)^{-1}(\Psi_{G}^p(T))\big)[\tau_q k_n], [\tau_{q'} l_n]\big>}_{L^q, L^{q'}}$$
$$=\sum_{n=1}^{\infty} \overline{\big<\alpha_{p,q}(T)[\tau_q k_n], [\tau_{q'} l_n]\big>}_{L^q, L^{q'}}
=\big<u,\alpha_{p,q}(T)\big>_{A_q, PM_q}.$$In particular for every $\mu\in M^1(G)$ we have
$$\big<v, \lambda_{G}^p(\mu)\big>_{A_p, PM_p}=\big<u,\alpha_{p,q}(\lambda_{G}^p(\mu))\big>_{A_q, PM_q}=\big<u, \lambda_{G}^q(\mu)\big>_{A_q, PM_q}.$$This implies $\tilde{\mu}(v)=\tilde{\mu}(u)$ and therefore $u=v,$ we obtain that $u\in A_p(G).$ We also obtain for every $u\in A_q(G)$ and for every $ T\in CV_p(G)$
$$\big<u, T\big>_{A_p, PM_p} =\big<u, \alpha_{p,q}(T)\big>_{A_q, PM_q}. $$This implies the following estimate:
$$\big|\big<u, T\big>_{A_p, PM_p}\big|
 =\big|\big<u, \alpha_{p,q}(T)\big>_{A_q, PM_q}\big|\leq \|u\|_{A_q} \tribare{\alpha_{p,q}(T)}_q$$
 $$\leq \|u\|_{A_q} \tribare{T}_p$$thus $\|u\|_{A_p} \leq \|u\|_{A_q}.$

 It remains to verify that for every $u\in A_q(G)$ and for every $T\in CV_p(G)$ we have
 $$\alpha_{p,q}(uT)=u\hskip1pt \alpha_{p,q}(T).$$Consider an arbitrary $v\in A_q(G).$ We have
 $$\big<v, \alpha_{p,q}(uT)\big>_{A_q, PM_q}=\big<v, uT \big>_{A_p, PM_p}.$$But
 $$\big<v, uT\big>_{A_p, PM_p}=\big<v u, T\big>_{A_p, PM_p}=\big< vu, \alpha_{p,q}(T)\big>_{A_q, PM_q}$$
 $$=\big< v, u\hskip1pt\alpha_{p,q}(T)\big>_{A_q, PM_q}.$$We get
 $$\big<v, \alpha_{p,q}(uT)\big>_{A_q, PM_q}=\big< v, u\hskip1pt\alpha_{p,q}(T)\big>_{A_q, PM_q}$$and finally
 $$\alpha_{p,q}(uT)=u\hskip1pt \alpha_{p,q}(T).$$
 \end{proof}

\section*{References}

[1] A. Derighetti, Convolution Operators on Groups, Lecture Notes of the Unione Matematica Italiana 11, Springer-Verlag Berlin Heidelberg 2011.
\vskip5pt
[2] A. Derighetti, Survey on the Fig$\grave{\hbox{a}}$\hskip2pt-Talamanca  Herz algebra, $\Sigma $ Mathematics {\bf 2019}, 7, 660; doi:10.3390/math7080660.
\vskip5pt
[3] C. Herz, The theory of $p$\hskip1pt-spaces with an application to convolution operators,  Trans. Am. Math. {\bf 154} (1971), 69\hskip1pt-82. 
\vskip5pt
[4]  C. Herz and N. Rivière, Estimates for translation\hskip1pt-invariant operators on spaces with mixed norms, Studia Mathematica {\bf XLIV} (1972), 511\hskip1pt-515.

\end{document}